\newcommand{\CM}{Cohen-Macaulay}
\newcommand{\wrt}{with respect to}
\newcommand{\n}{\mathfrak{n} }
\newcommand{\m}{\mathfrak{m} }
\newcommand{\g}{\mathfrak{gr} }
\newcommand{\ZZ}{\mathbb{Z} }
\newcommand{\FF}{\mathcal{F}}
\newcommand{\GG}{\mathbb{G}}
\newcommand{\EE}{\mathbb{E}}
\newcommand{\C}{\mathbf{C} }
\newcommand{\rt}{\rightarrow}
\newcommand{\image}{\operatorname{image}}
\newcommand{\Ass}{\operatorname{Ass}}
\newcommand{\height}{\operatorname{height}}
\newcommand{\Supp}{\operatorname{Supp}}
\newcommand{\injdim}{\operatorname{injdim}}
\newcommand{\Hom}{\operatorname{Hom}}
\newcommand{\Ext}{\operatorname{Ext}}
\theoremstyle{plain}
\newtheorem{theorem}{Theorem}[section]
\newtheorem{corollary}[theorem]{Corollary}
\newtheorem{lemma}[theorem]{Lemma}
\newtheorem{proposition}[theorem]{Proposition}
\theoremstyle{definition}
\newtheorem{definition}[theorem]{Definition}
\newtheorem{remark}[theorem]{Remark}
\theoremstyle{remark}
\begin{document}

\title[invariant rings]{Local cohomology modules  of invariant rings}
 \author{Tony J. Puthenpurakal}
\date{\today}
\address{Department of Mathematics, Indian Institute of Technology Bombay, Powai, Mumbai 400 076, India}
\email{tputhen@math.iitb.ac.in}

\subjclass{Primary 13D45; Secondary 13A50 }
\keywords{local cohomology, invariant rings}

\begin{abstract}
Let $K$ be a field  and let $R$ be a regular domain containing $K$. 
Let $G$ be a finite subgroup of the group of  automorphisms of $R$. We assume that $|G|$ is invertible in $K$. Let $R^G$ be the ring of invariants of $G$. Let $I$ be an ideal in $R^G$.  Fix $i \geq 0$. 
 If $R^G$ is Gorenstein then,
\begin{enumerate}[ (i)]
\item
$\injdim_{R^G} H^i_I(R^G) \leq \dim \Supp H^i_I(R^G).$
\item
 $H^j_{\m}(H^i_I(R^G))$ is injective, where $\m$ is any maximal ideal of $R^G$.
 \item
 $\mu_j(P, H^i_I(R^G)) = \mu_j(P^\prime, H^i_{IR}(R))$ where $P^\prime$ is any prime in $R$ lying above $P$.
\end{enumerate}
We also prove that if $P$ is a prime ideal in $R^G$ with $R^G_P$ \emph{not Gorenstein}
 then either the bass numbers $\mu_j(P, H^i_I(R^G)) $ is zero for all $j$ or there exists $c$ such that $\mu_j(P, H^i_I(R^G)) = 0 $ for $j < c$ and $\mu_j(P, H^i_I(R^G)) > 0$ for all $j \geq c$.
\end{abstract}

\maketitle
\section{Introduction}
Throughout this paper $R$ is a commutative Noetherian ring. If $M$ is an $R$-module and if $I$ is an ideal in $R$, we denote by $H^i_I(M)$ the  $i^{th}$ local cohomology module with respect to $I$.
 
In a remarkable paper, \cite{HuSh}, Huneke and Sharp proved that if $R$ is a regular ring containing a field of characteristic $p > 0$, and $I$ is an ideal in $R$ then the local cohomology modules of $R$ \wrt \ $I$ have the following properties:
\begin{enumerate}[\rm(i)]
\item
$H^j_{\m}(H^i_I(R))$ is injective, where $\m$ is any maximal ideal of $R$.
\item
$\injdim_R H^i_I(R) \leq \dim \Supp H^i_I(R)$.
\item
The set of associated primes of $H^i_I(R)$ is finite.
\item
All the Bass numbers of $H^i_I(R)$ are finite.
\end{enumerate}
Here $\injdim_R H^i_I(R)$ denotes the injective dimension of $H^i_I(R)$. Also $\Supp M = \{ P \mid  M_P \neq 0 \ \text{and $P$ is a prime in $R$}\}$ is the support of an $R$-module $M$. The $j^{th}$ Bass number of an $R$-module $M$ with respect to a prime ideal $P$ is defined as $\mu_j(P,M) = \dim_{k(P)} \Ext^j_{R_P}(k(P), M_P)$ where $k(P)$ is the residue field of $R_P$.

 In another remarkable paper, for regular rings in characteristic zero, Lyubeznik was able to establish the above properties  
for a considerably larger class of functors than just the local cohomology modules, see  \cite{Lyu-1}. In particular for ideals $I_1,\ldots,I_n$ in $R$ and $T(R) = H^{i_1}_{I_1}(H^{i_2}_{I_2}(\cdots H^{i_n}_{I_n}(R) \cdots))$ then $T(R)$ satisfies the following properties:
  \begin{enumerate}[\rm(i)]
\item
$H^j_{\m}(T(R))$ is injective, where $\m$ is any maximal ideal of $R$.
\item
$\injdim_R T(R) \leq \dim \Supp T(R)$.
\item
For every maximal ideal $\m$, the number of  associated primes of $T(R)$ contained in $\m$
 is finite.
\item
All the Bass numbers of $T(R)$ are finite.
\end{enumerate}
 This, in turn, raised the question of whether the results (i)–-(iv) of Huneke and Sharp (in characteristic $p>0$) could be extended to this larger class of functors. In \cite{Lyu-2}, Lyubeznik proves it.
 
 For singular rings analogus results are  in general false. Hartshorne gave an example of  a singular ring $R$, an ideal $I$ and a maximal ideal $\m$ of $R$  such that $\mu_0(\m, H^2_I(R))$ is infinite, see \cite[Sect. 3]{Hart}. Singh gave the first example of a singular ring $R$ having an ideal $I$ such that $\Ass_R H^i_I(R)$ is infinite, see \cite{AS}. In this example the ring $R$ did not contain a field. Later Katzman, see \cite{K}, gave an example of an affine algebra $R$   over a field  (and also a local ring containing a field) having an ideal $I$ such that $\Ass_R H^i_I(R)$ is infinite. Later Singh and Swanson gave similar examples of a ring having only rational singularities, see \cite{ASIS}. 
 
 In a nice paper N\'{u}\~{n}ez-Betancourt, proved that
  if $S \rt R$ is a homomorphism of Noetherian rings that splits, then for every ideal $I$ in $S$ and every non-negative integer $i$, if $\Ass_R H^i_{IR}(R)$ is finite then $\Ass_S H^i_I(S)$ is finite. In addition, if $R$ is Cohen-Macaulay and finitely generated as an $S$-module and all Bass numbers of the $R$-modules $H^i_{IR}(R)$ are finite, then all Bass numbers of the $S$-modules $H^i_I(S)$ are finite. 
 
 A case when the above result holds is when $R$ is a regular domain containing a field $K$ and $G$ is a finite group acting on $R$ with $|G|$ invertible in $K$  and $S = R^G$. Our result is that in this case much more is true.
 
 \begin{theorem}\label{main}
Let $K$ be a field  and let $R$ be a regular domain containing $K$. 
Let $G$ be a finite subgroup of the group of  automorphisms of $R$ with $|G|$  invertible in $K$. Let $R^G$ be the ring of invariants of $G$. Let $I_1,I_2,\cdots, I_r$ be ideals in $R^G$. Set $T(R^G) = H^{i_1}_{I_1}(H^{i_2}_{I_2}(\cdots H^{i_r}_{I_r}(R^G) \cdots)$  for some $i_1,\cdots,i_r \geq 0$. 
\begin{enumerate}[\rm (i)]
\item
If $R^G$ is Gorenstein then
\begin{enumerate}[\rm (a)]
\item
$\injdim_{R^G} T(R^G) \leq \dim \Supp T(R^G).$
\item
Let $P$ be a prime ideal in $R^G$. Then
$\mu_j(P, T(R^G)) = \mu_j(P^\prime, T(R))$ where $T(R) = H^{i_1}_{I_1R}(H^{i_2}_{I_2R}(\cdots H^{i_r}_{I_rR}(R) \cdots)$  and $P^\prime$ is any prime in $R$ lying above $P$.
\end{enumerate}
\item
Let $P$ be a prime ideal of $R^G$ with $R^G_P$ \emph{not Gorenstein}. Then for all $j \geq 0$, either the Bass numbers
$\mu_{j}(P, T(R^G)) = 0$ for all $j$ or  there exists $c$ such that
 $\mu_j(P, T(R^G)) = 0 $ for $j < c$ and $\mu_j(P, T(R^G)) > 0$ for all $j \geq c$.
\end{enumerate}
\end{theorem}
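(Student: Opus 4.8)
The plan is to pass to a complete local situation and then read the Bass numbers of $T(S):=T(R^G)$ off those of $T(R)$, using that $T(S)$ is an $R^G$-direct summand of $T(R)$ (via the Reynolds operator $\rho=|G|^{-1}\sum_{g\in G}g$) and that $T(R)$ is a Lyubeznik-type module over the regular ring $R$. Write $S=R^G$. Since Bass numbers localize and are preserved by completion, and since local cohomology commutes with localization, with flat base change, and — because $|G|^{-1}\in K$ — with the exact functor $(-)^G$ (with $(R\otimes_S\widehat{S_P})^G=\widehat{S_P}$), we may replace the data by: $(S,\m,l)$ complete local, $R$ complete semilocal and module-finite over $S$, in fact a finite product $R=\prod_iR_i$ of regular complete local $K$-algebras, $G$ acting with $R^G=S$, $T(S)=T(R)^G$ an $S$-direct summand of $T(R)$. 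The task becomes: if $S$ is not Gorenstein, then $\mu_j(\m,T(S))=\dim_l\Ext^j_S(l,T(S))$ is either $0$ for all $j$ or vanishes exactly for $j<c$, for one value of $c$.

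Ingredients over $R$: each $T(R_i)$ is an $F$-finite $F$-module (char $p$) resp.\ a holonomic $D$-module (char $0$), so by Huneke--Sharp resp.\ Lyubeznik it has a \emph{finite} minimal injective resolution over $R_i$; applying $\Gamma_{\n_i}$ to it annihilates every differential, hence $H^a_{\n_i}(T(R_i))$ is an injective $R_i$-module $\cong E_{R_i}(k(\n_i))^{\mu_a(\n_i,T(R_i))}$ and $\mathbf R\Gamma_\n T(R)\simeq\bigoplus_a H^a_\n(T(R))[-a]$, $G$-equivariantly. I would also record the duality bridge $\Hom_S(R,E_S(l))\cong\bigoplus_iE_{R_i}(k(\n_i))$ ($G$-equivariantly — compute $\mu_0(\n_i,-)$, or invoke local duality), so that Matlis duality over $S$ restricted to $R$-modules becomes $\bigoplus_i\Hom_{R_i}(-,E_{R_i}(k(\n_i)))$ and commutes with $(-)^G$. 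Finally, since $|G|^{-1}\in K$, the functor $(-)^G$ commutes with $\Ext^j_S(l,-)$, so $\mu_j(\m,T(S))=\dim_l\Ext^j_S(l,T(R)^G)=\dim_l\bigl(\Ext^j_S(l,T(R))\bigr)^G$.

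Now compute $\Ext^\bullet_S(l,T(R))$ by change of rings: $\mathbf R\Hom_S(l,T(R))\simeq\mathbf R\Hom_R(l\otimes^{\mathbf L}_SR,T(R))$, giving a spectral sequence $E_2^{p,q}=\Ext^p_R(\Tor^S_q(l,R),T(R))\Rightarrow\Ext^{p+q}_S(l,T(R))$. Each $\Tor^S_q(l,R)$ has finite length over $R$, so by the $\mathbf R\Gamma_\n$-formality $E_2^{p,q}=\Hom_R(\Tor^S_q(l,R),H^p_\n(T(R)))$, still finite length. Choosing the minimal $S$-free resolution of $R$ $G$-equivariantly ($|G|^{-1}\in K$) makes the spectral sequence $G$-equivariant, and $(-)^G$ turns it into $(E_2^{p,q})^G=\Hom_{R\rtimes G}(\Tor^S_q(l,R),H^p_\n(T(R)))\Rightarrow$ the spaces computing $\mu_\bullet(\m,T(S))$. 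Two facts feed in. First, the Betti numbers $\beta^S_q(R)=\dim_l\Tor^S_q(l,R)$ have no internal zeros and are nonzero for \emph{every} $q\ge0$, because $\projdim_SR=\infty$: indeed $\depth_SR=\dim S$ always — each maximal ideal of $R$ has height $\dim S$, $R$ being (before completion) a domain integral over the local $S$ — so a finite projective dimension would, by Auslander--Buchsbaum, make $R$ free over $S$ and force $S$ regular, which a non-Gorenstein $S$ is not. Second, the $R$-socles $W_p:=\Hom_R(k,H^p_\n(T(R)))$ are $k\rtimes G$-modules with $\dim_kW_p=\mu_p(\n,T(R))$, nonzero only for $p$ in a finite interval.

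The endgame splits by case. For $S$ Gorenstein, one checks that the rows $q\ge1$ die and only the column $q=0$, namely $\Hom_{R\rtimes G}(R/\m R,H^p_\n(T(R)))$, survives, which one identifies with $\mu_j(P',T(R))$ — this is part (i)(b), and part (i)(a) follows from it together with $\injdim_RT(R)\le\dim\Supp_RT(R)$. For $S$ not Gorenstein the rows $q\ge1$ contribute; here $\Tor^S_q(l,R)^G=\Tor^S_q(l,S)=0$ for $q\ge1$, so these modules are built only from nontrivial $G$-isotypic pieces, which pair in $(E_2)^G$ against the matching pieces of the $H^p_\n(T(R))$, and one must show no cancellation occurs in the abutment; granting this, combining the no-gap finitely supported sequence $(\mu_p(\n,T(R)))_p$ with the everywhere-positive $(\beta^S_q(R))_q$ (morally as a $G$-corrected convolution, the correction being exactly what reproduces (i)(b) when $S$ is Gorenstein) yields $\mu_j(\m,T(S))>0$ for all $j\ge c:=\depth_ST(S)$ and $=0$ for $j<c$ whenever $T(S)\ne0$, while $T(S)=0$ kills all Bass numbers. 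The main obstacle is precisely this non-cancellation step: proving that the $q\ge1$ contributions do not collapse out of the $G$-equivariant spectral sequence and that the relevant isotypic pairings are nonzero in infinitely many total degrees — equivalently, that non-Gorensteinness of $S$ (and not merely non-regularity, which already gives $\projdim_SR=\infty$) produces infinitely many nonzero Bass numbers with no gaps. I expect the decisive extra input to be duality over $S$ — e.g.\ $\injdim_SS=\infty\iff S$ is not Gorenstein, read through $\Tor^S_\bullet(l,E_S(l))$ — used exactly to exclude the cancellation that does take place when $S$ is Gorenstein.
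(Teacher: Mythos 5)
Your framework—reduce to the complete local case, then run the change-of-rings spectral sequence $E_2^{p,q}=\Ext^p_R(\Tor^S_q(l,R),T(R))\Rightarrow\Ext^{p+q}_S(l,T(R))$ and take $G$-invariants—is legitimate, but the proof has a genuine gap that you yourself flag: the non-cancellation step. After applying $(-)^G$, the rows $q\ge1$ become pairings $\Hom_{R*G}(\Tor^S_q(l,R),H^p_{\n}(T(R)))$ of nontrivial isotypic components (since $\Tor^S_q(l,R)^G=0$ for $q\ge1$), and nothing in your argument shows these pairings are nonzero, nor that the higher differentials of the spectral sequence do not kill whatever survives at $E_2$. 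This cannot be finessed: when $S$ is Gorenstein but not regular one still has $\projdim_SR=\infty$ and nonzero Betti numbers in every degree, yet by part (i)(b) the contributions of the rows $q\ge1$ \emph{must} all disappear from the abutment; so the dichotomy you need is invisible to the numerics $\bigl(\beta^S_q(R)\bigr)_q$ and $\bigl(\mu_p(\n,T(R))\bigr)_p$ alone, and the "decisive extra input" you defer to is precisely the theorem. Since the entire content of part (ii) is the positivity $\mu_j(P,T(R^G))>0$ for $j\ge c$, and the Gorenstein case is likewise left at "one checks that the rows $q\ge1$ die," neither half of the statement is actually established. (Secondary issues: the asserted "no-gap" property of $\bigl(\mu_p(\n,T(R))\bigr)_p$ is not justified, and the identification $c=\depth_ST(S)$ is neither needed nor obviously correct.)

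The paper avoids the spectral sequence entirely. Its crucial Lemma \ref{bass-prep} shows, by decomposing $H^j_{PA}(T(R)_P)$ over the primes $P_1,\dots,P_r$ of $R$ lying over $P$ and using the transitivity of the $G$-action on them (which forces all multiplicities to coincide), that $\bigl(H^j_P(T(R^G))\bigr)_P\cong H^g_{PR^G_P}(R^G_P)^{s}$ for a single finite $s$. When $R^G_P$ is Gorenstein this module is injective, and Lemma \ref{lyu-lemma} ($\mu_j(P,M)=\mu_0(P,H^j_P(M))$ once all $(H^j_P(M))_P$ are injective) yields part (i) immediately. When $R^G_P$ is not Gorenstein, one applies $\Gamma_{\m}$ to a minimal injective resolution of $T(R^G)$: the resulting complex has terms $E^{r_j}$ and cohomology $H^d_{\m}(S)^{s_j}$, whose Matlis dual is $\omega^{s_j}$ with $\omega$ the canonical module, a non-free maximal \CM\ module of the completion $S$; the vanishing $\Ext^1_S(N,\omega)=0$ for maximal \CM\ $N$ drives an induction showing every cocycle module $Z^j$ ($j\ge c$) is nonzero of infinite injective dimension, whence $r_j>0$. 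That concrete module theory over the completion is exactly what replaces your missing non-cancellation argument.
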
 
The main example where our Theorem applies is when $R = K[X_1,\ldots,X_n]$ or $R = K[[X_1,\ldots,X_n]]$ and $G$ is a finite subgroup of $GL_n(K)$ acting linearly on $R$, with $|G|$ invertible in $K$. 
In this case we should note that,   by a result due to K. Watanabe, $R^G$ is Gorenstein if $G \subseteq SL_n(K)$; see \cite{W}. My motivation was to understand local cohomology modules in this case. However to prove the result for this special case I had to prove the general result.

It is perhaps of some interest to explicitly compute local cohomology modules via computer algebra software packages. I prove a finiteness result which I hope will help in this direction. Let $R = K[X_1,\ldots,X_n]$ where $K$ is a field of characteristic zero and $G$ is a finite subgroup of $GL_n(K)$ acting linearly on $R$. Let $D(R)$ be the ring of $K$-linear differential operators on $R$. It is well-known that $D(R)$ is isomorphic to $A_n(K)$, the $n^{th}$-Weyl algebra over $K$. It is possible to extend the action of $G$ on $D(R)$; see   \ref{def-action}. Let $D(R)^G$ be the ring of invariants.  There are algorithms to compute $D(R)^G$, see \cite{T}. We prove
\begin{theorem}[with hypotheses as above]\label{main-2}
Let $I$ be an ideal in $R^G$. Then for all $i \geq 0$, $H^i_I(R^G)$ is a $D(R)^G$-module of finite length.
\end{theorem}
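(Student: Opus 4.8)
The plan is to deduce the statement from the (classical) $D$-module finiteness upstairs on $R$, transporting it to $R^G$ by means of the Reynolds operator $\rho = |G|^{-1}\sum_{g\in G}g$, which exists because $|G|$ is invertible and which exhibits $R^G$ as an $R^G$-direct summand of $R$. Write $D = D(R)\cong A_n(K)$ and choose generators $\bx = x_1,\dots,x_m$ of $I$ lying in $R^G$. Since $G$ acts on $D$ extending its action on $R$ (see \ref{def-action}) and each $x_j$ is $G$-fixed, the \v{C}ech complex $\check{C}^\bullet(\bx;R)$ is a complex of modules over the skew group ring $D*G$; hence $N := H^i_{IR}(R)$ is naturally a $D*G$-module, i.e.\ a $D$-module with a compatible $G$-action $g(\delta m)=g(\delta)g(m)$. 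In particular its invariant submodule $N^G$ is a module over $D^G := D(R)^G$.

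The first step is to identify $H^i_I(R^G)$ with $N^G$ as $D^G$-modules. As $x_j\in R^G$, localization commutes with taking $G$-invariants, $(R_{x_j})^G = (R^G)_{x_j}$, so $\check{C}^\bullet(\bx;R)^G = \check{C}^\bullet(\bx;R^G)$; and $(-)^G$, being exact (it splits off $\ker\rho$), commutes with cohomology. Hence $N^G = H^i\big(\check{C}^\bullet(\bx;R^G)\big) = H^i_I(R^G)$, and the $D^G$-action inherited from the action of $D$ on $R$ is the evident one on $H^i_I(R^G)$.

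The substantive input is that $N = H^i_{IR}(R)$ is a holonomic $A_n(K)$-module, hence has finite length $\operatorname{length}_D N < \infty$; this is Lyubeznik's theorem \cite{Lyu-1}. It then remains to transfer finite length from $D$ down to $D^G$, which is the one genuinely new point. Let $e = |G|^{-1}\sum_{g}g \in D*G$ be the averaging idempotent, acting on $N$ as the projection onto $N^G$, and write $\rho(\delta) = |G|^{-1}\sum_g g(\delta)\in D^G$ for the corresponding averaging map on $D$. For any $D^G$-submodule $L\subseteq N^G$, the $D$-submodule $DL\subseteq N$ is $G$-stable (because $L$ is $G$-fixed and each $g$ is an automorphism of $D$), and $e(DL) = L$: indeed for $\delta\in D$ and $l\in L$ one has $e(\delta l) = |G|^{-1}\sum_g g(\delta)g(l) = \big(|G|^{-1}\sum_g g(\delta)\big)l = \rho(\delta)l \in D^G L = L$, while $L = eL\subseteq e(DL)$. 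Thus $L\mapsto DL$ is an inclusion-preserving injection from $D^G$-submodules of $N^G$ into $D$-submodules of $N$, whence $\operatorname{length}_{D^G}N^G \le \operatorname{length}_D N < \infty$. Combined with the previous paragraph, $H^i_I(R^G)$ has finite length over $D(R)^G$.

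The main obstacle is more one of compatible bookkeeping than of depth: one must keep the $G$-action, the $D(R)$-structure and the isomorphism $H^i_I(R^G)\cong H^i_{IR}(R)^G$ simultaneously in hand, so that the length bound truly transports to a bound on $\operatorname{length}_{D(R)^G}H^i_I(R^G)$. The two honest ingredients are Lyubeznik's holonomicity theorem upstairs and the invertibility of $|G|$, the latter supplying both the exactness of $(-)^G$ and the idempotent $e$ with the key identity $e(DL) = L$. Alternatively, the last step can be phrased through the Morita equivalence between $D*G$ and $e(D*G)e = D(R)^G$ — valid since $A_n(K)$ is simple in characteristic zero, so that $e$ is a full idempotent — under which $N\mapsto eN = N^G$ preserves finite length.
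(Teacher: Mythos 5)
Your proposal is correct, and its first two steps (the \v{C}ech complex of a $G$-invariant generating set is a complex of $D(R)*G$-modules, so $H^i_{IR}(R)$ is a $D(R)*G$-module with $H^i_{IR}(R)^G\cong H^i_I(R^G)$; holonomicity gives finite length over $D(R)$) coincide with the paper's, which invokes Theorem \ref{loc-invar-2} and Lyubeznik in exactly the same way. Where you diverge is the final descent of finite length from $D(R)$ to $D(R)^G$. The paper takes a composition series of $N=H^i_{IR}(R)$ as a $D(R)*G$-module, applies the exactness of $(-)^G$ (Lemma \ref{fixed-exact}), and uses Lemma \ref{descent-simple} --- the invariants of a simple $A*G$-module are zero or simple over $A^G$ --- to conclude via Corollary \ref{descent-Artin}. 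You instead build the inclusion-preserving injection $L\mapsto DL$ from $D(R)^G$-submodules of $N^G$ to $D(R)$-submodules of $N$, with the retraction $e(DL)=L$ supplied by the averaging idempotent; your computation $e(\delta l)=\rho(\delta)l\in L$ is correct, and strict inclusions are preserved because $e$ recovers $L$ from $DL$. Your route yields the sharper quantitative statement $\operatorname{length}_{D(R)^G}N^G\le\operatorname{length}_{D(R)}N$ and bypasses the simple-module lemma entirely, at the cost of being specific to the submodule lattice rather than giving the structural fact (simples descend to simples or zero) that the paper's Lemma \ref{descent-simple} records for its own sake. One caveat on your closing aside: fullness of the idempotent $e$ in $D(R)*G$ does not follow merely from simplicity of $A_n(K)$; one needs simplicity of the skew group ring itself (true here because the $G$-action is outer, but requiring a separate argument). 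Since that remark is only an alternative and your main argument does not use it, this does not affect the proof.
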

  
The main technical tool in this paper is the skew group ring of $R$ with respect to $G$; we denote it by $R*G$.
 We prove that certain local cohomology  modules become naturally a module over the skew group ring and this
 has an impact to its structure.
 
 We now describe in brief the contents of this paper.  In section two we discuss some preliminary results on skew group rings. In section three  we prove our results regarding skew group rings and local cohomology.  In section four we discuss injective resolution of a module over the skew group ring and discuss its application to local cohomology. We then apply these results in the next section to prove a lemma regarding $H^j_P(-)_P$ which we apply in the next two sections. In section six we assume that $R^G$ is Gorenstein and prove the first part of Theorem \ref{main}. In the next section we consider the case when $R^G$ is not Gorenstein and prove the final part of Theorem \ref{main}.  Finally in section eight we prove \ref{main-2}.

\section{skew group rings}
In this section $A$ is a ring (not necessarily commutative) and $G$ is a finite subgroup of $Aut(A)$; the group of automorphisms of $A$. We assume that $|G|$ is invertible in $A$. In this section we describe some of the basic properties of the skew group ring $A*G$ that we will need. Most of the results here are perhaps already known. However absence of a good reference forces me to include all proofs.

\s Recall that 
$$A*G = \{ \sum_{\sigma \in G} a_\sigma \sigma \mid a_{\sigma} \in A \ \text{for all} \ \sigma \},$$
with multiplication defined as
\[
(a_{\sigma}\sigma)(a_\tau \tau) = a_{\sigma}\sigma(a_{\tau})\sigma \tau.
\]  

\begin{remark}
An $A*G$ module $M$ is precisely an $A$-module on which $G$ acts such that for all $\sigma \in G$,
\[
\sigma(am) = \sigma(a)\sigma(m) \quad \text{for all} \ a \in A \ \text{and} \ m \in M.
\] 
\end{remark}

\begin{definition}
Let $M$ be an $A*G$-module. Then
\[
M^G = \{ m \in M \mid \sigma(m) = m \ \text{for all} \ \sigma \in G \}.
\]
\end{definition}
In particular set $A^G$ to be the ring of invariants of $G$. Clearly $M^G$ is an $A^G$-module.
It can also be easily checked that if $u \colon M \rt N$ is $A*G$-linear then $u(M^G) \subseteq N^G$ and the restriction map $\widetilde{u} \colon M^G \rt N^G$ is $A^G$-linear. Thus we have a functor $(-)^G \colon Mod(A*G) \rt Mod(A^G)$. It can be verified that $(-)^G = \Hom_{A*G}(A,-)$; so in particular it is left exact.

\s For any $A*G$ module $M$ we have a Reynolds operator 
\begin{align*}
\rho^M \colon M &\rt M^G \\
              m &\rt \frac{1}{|G|}\sum_{\sigma \in G} \sigma m.
\end{align*}
Clearly $\rho^M(m) = m$ for all $m \in M^G$. Also $\rho^M$ is $A^G$-linear and it splits the inclusion $M^G \rt M$. 

We now show that taking invariants is an exact functor.
\begin{lemma}\label{fixed-exact}
Let $0 \rt M_1 \xrightarrow{u_1} M_2 \xrightarrow{u_2} M_3 \rt 0$ be a short exact sequence of $A*G$-modules. 
Then the induced sequence
\[
0 \rt M_1^G \xrightarrow{\widetilde{u_1}}M_2^G \xrightarrow{\widetilde{u_2}} M_3^G \rt 0
\]
is also exact.
\end{lemma}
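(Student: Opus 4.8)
The plan is to deduce exactness at $M_1^G$ and at $M_2^G$ for free from the observation recorded above that $(-)^G = \Hom_{A*G}(A,-)$ is left exact, and then to prove the only nontrivial assertion, namely the surjectivity of $\wt{u_2} \colon M_2^G \rt M_3^G$, by a direct averaging argument built on the Reynolds operator.

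For the surjectivity, I would take an arbitrary $m_3 \in M_3^G$ and, using that $u_2$ is surjective, choose $m_2 \in M_2$ with $u_2(m_2) = m_3$. This lift is in general not $G$-invariant, so I would correct it by applying the Reynolds operator: set $m_2' := \rho^{M_2}(m_2) \in M_2^G$. Since $u_2$ is $A*G$-linear it commutes with the $G$-action, hence with the averaging map defining $\rho$; therefore $u_2(m_2') = \rho^{M_3}(u_2(m_2)) = \rho^{M_3}(m_3) = m_3$, where the last equality holds because $m_3$ is already fixed by $G$. Thus $\wt{u_2}(m_2') = m_3$, and $\wt{u_2}$ is onto.

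I do not expect any real obstacle here; the argument is essentially bookkeeping. The two points that have to be watched are that the Reynolds operator is available only because $|G|$ is invertible in $A$, and that its commutation with $u_2$ is exactly the $G$-equivariance built into $A*G$-linearity of $u_2$. As an alternative one could argue structurally: the element $e = \tfrac{1}{|G|}\sum_{\sigma \in G}\sigma$ is an idempotent of $A*G$ and $A \cong (A*G)e$ as left $A*G$-modules, so $A$ is a projective $A*G$-module and $\Hom_{A*G}(A,-) = (-)^G$ is exact; but the hands-on Reynolds argument above is shorter and entirely self-contained, and it is the one I would write up.
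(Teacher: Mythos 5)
Your proof is correct and is essentially the paper's own argument: left exactness is quoted from the identification $(-)^G=\Hom_{A*G}(A,-)$, and surjectivity of $\widetilde{u_2}$ is obtained by lifting an invariant element and correcting the lift with the Reynolds operator, using the $G$-equivariance of $u_2$. The only cosmetic difference is that you phrase the key step as $u_2\circ\rho^{M_2}=\rho^{M_3}\circ u_2$ while the paper averages the identity $u_2(\sigma t)=\xi$ directly; these are the same computation.
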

\begin{proof}
We have already observed the fixed point functor is left exact. Thus it suffices to prove that $\widetilde{u_2}$ is surjective. Let $\xi \in M_3^G$. As $u_2$ is surjective there exists $t \in M_2$ with $u_2(t) = \xi$. For any $\sigma \in G$ notice
\[
u_2(\sigma t) = \sigma u_2(t) = \sigma \xi = \xi.
\]
It follows that
\[
u_2(\rho^{M_2}(t)) = \xi.
\]
It follows that $\widetilde{u_2}$ is surjective.
\end{proof} 
The following result is interesting.
\begin{lemma}\label{descent-simple}
Let $M$ be a simple $A*G$-module. Then either $M^G = 0$ or it is a simple $A^G$-module.
\end{lemma}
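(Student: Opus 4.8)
The plan is to assume $M^G \neq 0$ and to show that any nonzero $A^G$-submodule $N \subseteq M^G$ must equal all of $M^G$; since $M^G$ is then a nonzero $A^G$-module with no proper nonzero submodule, it is simple. So fix such an $N$.

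First I would pass from $N$ to the $A*G$-submodule $(A*G)N$ of $M$ that it generates. As $N \neq 0$ and $M$ is simple over $A*G$, we have $(A*G)N = M$. The key point is to identify this submodule explicitly. Since $N \subseteq M^G$, every $\sigma \in G$ fixes each element of $N$, so for $n \in N$ and an arbitrary element $\sum_{\sigma \in G} a_\sigma \sigma$ of $A*G$ we get $\left(\sum_{\sigma} a_\sigma \sigma\right) n = \sum_{\sigma} a_\sigma \sigma(n) = \sum_{\sigma} a_\sigma n = \left(\sum_{\sigma} a_\sigma\right) n \in An$. Hence $(A*G)N = AN$, and therefore $M = AN$.

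Next I would invoke the Reynolds operator $\rho^M$. Let $m \in M^G$ be arbitrary and write $m = \sum_i a_i n_i$ with $a_i \in A$ and $n_i \in N$, which is possible since $M = AN$. Using $\rho^M(m) = m$ together with the identities $\sigma(a n) = \sigma(a)\sigma(n)$ and $\sigma(n_i) = n_i$, one computes $m = \frac{1}{|G|}\sum_{\sigma \in G}\sigma(m) = \sum_i \left( \frac{1}{|G|}\sum_{\sigma \in G} \sigma(a_i) \right) n_i = \sum_i \rho^A(a_i)\, n_i$, where each $\rho^A(a_i) = \frac{1}{|G|}\sum_{\sigma} \sigma(a_i)$ lies in $A^G$. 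Since $N$ is an $A^G$-submodule of $M^G$, this shows $m \in N$. As $m$ was arbitrary, $M^G \subseteq N$, so $M^G = N$, and $M^G$ is simple.

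The argument has no real obstacle; its only subtlety is the observation that an $A^G$-submodule of invariants generates inside $M$ exactly its $A$-span, so that applying $\rho^M$ — which exists precisely because $|G|$ is invertible in $A$ — retracts that $A$-span back into $N$. This is the mechanism that transfers simplicity over $A*G$ to simplicity over $A^G$.
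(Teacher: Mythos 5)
Your proof is correct and is essentially the same argument as the paper's: both reduce $(A*G)N$ to the $A$-span of $N$ using the $G$-invariance of elements of $N$, and then apply the Reynolds operator to the $A$-coefficients to land back in $N$. The only cosmetic difference is that the paper works with a single nonzero generator $t\in N$ rather than a general sum $\sum_i a_i n_i$.
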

\begin{proof}
Suppose $M^G \neq 0$. Let $N \neq 0$ be an $A^G$-submodule of $M^G$. Let $t \in N$ be non-zero. Let $\xi \in M^G$ be an arbitrary   non-zero element. 

As $M$ is a simple $A*G$-module we have $M = A*G t$. So $\xi = \alpha t$ for some $\alpha \in A*G$. Say $\alpha = \sum_{\sigma \in G}a_\sigma \sigma$. So
\[
\xi = \alpha t = \sum_{\sigma \in G}a_\sigma \sigma t = (\sum_{\sigma \in G}a_\sigma)t.
\]
The last equality holds since $\sigma t = t$ for all $\sigma \in G$. Set $d = \sum_{\sigma \in G}a_\sigma$. So
$\xi = d t$. 
It follows that for $\sigma \in G$
\[
\xi = \sigma(\xi) = \sigma(d)\sigma(t) = \sigma(d) t.
\]
It follows that
\[
\xi = \frac{1}{|G|}\sum_{\sigma \in G} \sigma(\xi) = \frac{1}{|G|}\sum_{\sigma \in G} (\sigma(d) t) = \left( \frac{1}{|G|}\sum_{\sigma \in G} \sigma(d)\right)t.
\]
So $\xi = \rho^A(d)t$ and thus $\xi \in N$. Thus $N = M^G$. It follows that $M^G$ is a simple $A^G$-module.
\end{proof}
An easy consequence of the previous Lemma is the following result.
\begin{corollary}\label{descent-Artin}
Let $M$ be an $A*G$-module of finite length. Then $M^G$ has finite length as an $A^G$-module.
\end{corollary}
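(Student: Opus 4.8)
The plan is to reduce everything to the two preceding lemmas by passing to a composition series. First I would choose a composition series
\[
0 = M_0 \subset M_1 \subset \cdots \subset M_n = M
\]
of $M$ as an $A*G$-module; this exists precisely because $M$ has finite length over $A*G$, and $n$ equals that length. Each quotient $M_i/M_{i-1}$ is then a simple $A*G$-module.

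Next I would apply the functor $(-)^G$, which by Lemma \ref{fixed-exact} is exact, to each short exact sequence $0 \rt M_{i-1} \rt M_i \rt M_i/M_{i-1} \rt 0$. This produces short exact sequences of $A^G$-modules
\[
0 \rt M_{i-1}^G \rt M_i^G \rt (M_i/M_{i-1})^G \rt 0 .
\]
Hence $M^G = M_n^G$ carries a finite filtration whose successive quotients are the modules $(M_i/M_{i-1})^G$. By Lemma \ref{descent-simple} each of these is either $0$ or a simple $A^G$-module, so each has length at most one. Splicing the sequences together gives $\ell_{A^G}(M^G) \leq n = \ell_{A*G}(M) < \infty$, which is exactly the assertion. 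Equivalently one could run an induction on $\ell_{A*G}(M)$: the simple case is Lemma \ref{descent-simple}, and in general one picks a maximal proper $A*G$-submodule $N$, applies $(-)^G$ to $0 \rt N \rt M \rt M/N \rt 0$, and combines the inductive hypothesis for $N$ with Lemma \ref{descent-simple} for $M/N$.

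I do not expect any real obstacle here: the content is entirely carried by Lemma \ref{fixed-exact} (exactness of $(-)^G$, which uses that $|G|$ is invertible, via the Reynolds operator) and Lemma \ref{descent-simple} (descent of simplicity), both already available. The only mild point to be careful about is that ``simple $A^G$-module'' indeed means length one, so that the filtration quotients contribute at most $1$ each to $\ell_{A^G}(M^G)$.
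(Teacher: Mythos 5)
Your argument is correct and is essentially identical to the paper's own proof: both pass to a composition series of $M$ over $A*G$, apply the exactness of $(-)^G$ from Lemma \ref{fixed-exact} to each short exact sequence, and invoke Lemma \ref{descent-simple} to see that each filtration quotient of $M^G$ is zero or simple. No gaps; the bound $\ell_{A^G}(M^G) \leq \ell_{A*G}(M)$ you note is a slight (correct) refinement of the stated conclusion.
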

\begin{proof}
$M$ has finite length as an $A*G$-module. So there is a filtration 
 $$ 0 = M_0 \subseteq M_1 \subseteq M_2 \subseteq \cdots M_{n-1} \subseteq M_n = M,$$
 such that $M_i/M_{i-1}$ is a simple $A*G$-module for $i= 1,\cdots,n$.
 
 Notice $0 = M_0^G \subseteq M_1^G \subseteq M_2^G \subseteq \cdots M_{n-1}^G \subseteq M_n^G = M^G$ is an filtration of $M^G$ as an $A^G$-module. The exact sequence
 \[
 0 \rt M_{i-1} \rt M_i \rt M_i/M_{i-1} \rt 0,
 \]
 yields 
 \[
 0 \rt M_{i-1}^G \rt M_i^G \rt (M_i/M_{i-1})^G \rt 0.
 \]
 By Lemma \ref{descent-simple} $(M_i/M_{i-1})^G$ is either zero or is simple as an $A^G$-module. It follows that $M^G$ has finite   length as an $A^G$-module.
\end{proof}

The next result shows that the fixed point operator commutes with talking homology. More precisely we have the following result.
\begin{theorem}\label{commute-hom}
Let $\C \colon \cdots \rt M_n \xrightarrow{u_n} M_{n-1} \xrightarrow{u_{n-1}}M_{n-2} \rt \cdots $ be a complex of 
$A*G$-modules. Consider
$$\C^G \colon \cdots \rt  M_n^G   \xrightarrow{\widetilde{u_n}} M_{n-1}^G \xrightarrow{\widetilde{u_{n-1}}} M_{n-2}^G \rt \cdots . $$
Then
\begin{enumerate}[\rm(1)]
\item
$\C^G$ is a complex of $A^G$-modules.
\item
For each $n \in \ZZ$ there is an $A^G$-linear map $\psi_n \colon H_n(\C^G) \rt H_n(\C)$.
\item
For all $n$ the map $\psi_n$ is injective.
\item
 For all $n \in \ZZ$ we have 
$\image \psi_n = H_n(\C)^G.$
\item
 For all $n \in \ZZ$ we have $H_n(\C^G) \cong H_n(\C)^G$ as $A^G$-modules
\end{enumerate}
\end{theorem}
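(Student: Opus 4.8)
The plan is to run everything through the exactness of the fixed-point functor established in Lemma \ref{fixed-exact}, together with the Reynolds operator. For each $n$ write $Z_n = \ker u_n$ and $B_n = \image u_{n+1}$; since the $u_n$ are $A*G$-linear these are $A*G$-submodules of $M_n$, and $B_n \subseteq Z_n$, so $H_n(\C) = Z_n/B_n$ is naturally an $A*G$-module. Part (1) is then immediate: by functoriality of $(-)^G$ we have $\widetilde{u_{n-1}}\circ \widetilde{u_n} = \widetilde{u_{n-1}\circ u_n} = \widetilde{0} = 0$, so $\C^G$ is a complex of $A^G$-modules.

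Next I would identify the cycles and boundaries of $\C^G$. Since $(-)^G$ is left exact, applying it to $0 \rt Z_n \rt M_n \xrightarrow{u_n} M_{n-1}$ gives $\ker \widetilde{u_n} = Z_n^G$, viewed inside $M_n^G$ (concretely $Z_n^G = Z_n \cap M_n^G$). For the boundaries, apply Lemma \ref{fixed-exact} to $0 \rt \ker u_{n+1} \rt M_{n+1} \rt B_n \rt 0$: this makes $M_{n+1}^G \rt B_n^G$ surjective, and since this map is exactly $\widetilde{u_{n+1}}$ followed by the inclusion $B_n^G \hookrightarrow M_n^G$, we get $\image \widetilde{u_{n+1}} = B_n^G$ (and $B_n^G \subseteq Z_n^G$). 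Hence $H_n(\C^G) = Z_n^G/B_n^G$. The inclusion $Z_n^G \hookrightarrow Z_n$ carries $B_n^G$ into $B_n$, and therefore induces an $A^G$-linear map $\psi_n \colon Z_n^G/B_n^G \rt Z_n/B_n = H_n(\C)$, which is the map required in (2).

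For injectivity (3): if $z \in Z_n^G$ maps to $0$ in $Z_n/B_n$ then $z \in B_n \cap M_n^G = B_n^G$, so $z$ already represents $0$ in $H_n(\C^G)$. For the image (4): any class in $\image \psi_n$ is represented by a $G$-fixed cycle, hence lies in $H_n(\C)^G$; conversely, given $z \in Z_n$ with $z + B_n \in H_n(\C)^G$ we have $\sigma z - z \in B_n$ for every $\sigma \in G$, so the Reynolds operator $\rho^{Z_n}$ (the restriction of $\rho^{M_n}$ to the $A*G$-submodule $Z_n$) produces $\rho^{Z_n}(z) \in Z_n^G$ with $\rho^{Z_n}(z) - z = \frac{1}{|G|}\sum_{\sigma \in G}(\sigma z - z) \in B_n$; thus $z + B_n = \psi_n\big(\rho^{Z_n}(z) + B_n^G\big)$. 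Finally (5) is the formal consequence of (3) and (4): $\psi_n$ is an $A^G$-linear injection with image $H_n(\C)^G$, hence restricts to an isomorphism $H_n(\C^G) \xrightarrow{\sim} H_n(\C)^G$.

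I do not expect a genuine obstacle: once Lemma \ref{fixed-exact} is available this is the standard ``exact functors commute with homology'' bookkeeping, the only mildly delicate point being the explicit description of $\image \psi_n$, where the Reynolds operator does the work. One could instead argue (2)--(5) abstractly, noting that an exact additive functor commutes with passage to homology, but building $\psi_n$ by hand as above keeps its naturality and its identification with $H_n(\C)^G$ transparent.
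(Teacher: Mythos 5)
Your proof is correct and follows essentially the same route as the paper: construct $\psi_n$ from the inclusion of fixed cycles into cycles, and use the Reynolds operator / exactness of $(-)^G$ to get injectivity and to identify the image with $H_n(\C)^G$. The only cosmetic difference is that you obtain injectivity by first identifying $\image\widetilde{u_{n+1}}$ with $B_n(\C)\cap M_n^G$ via Lemma \ref{fixed-exact}, where the paper re-runs the averaging argument $z=u_{n+1}(\rho^{M_{n+1}}(t))$ directly; both rest on the same mechanism.
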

\begin{proof}
The assertion (1) is clear. Furthermore (5) follows from (3) and (4). 

(2) Fix $n \in \ZZ$. Let $z \in Z_n(\C^G)$. So $\widetilde{u_n}(z) = 0$. Thus $u_n(z) = 0$. It follows that $z \in Z_n(\C)$. If $b \in B_n(\C^G)$ then let $\widetilde{u_{n+1}}(t) = b$. Then $u_{n+1}(t) = b$ and so $b \in B_n(\C)$. Thus we have a well-defined map
\begin{align*}
\psi_n \colon H_n(\C^G) &\rt H_n(\C) \\
            z + B_n(\C^G) &\rt z + B_n(\C).
\end{align*}
Clearly $\psi_n$ is $A^G$-linear.

(3) Fix $n \in \ZZ$. Let $\xi \in H_n(\C^G)$ be such that $\psi_n(\xi) = 0$. Say $\xi = z + B_n(\C^G)$.
Then $\psi_n(\xi) = z + B_n(\C)$. So $z \in B_n(\C)$. Thus there exists $t \in M_{n+1}$ such that $u_{n+1}(t) = z$.
Let $\sigma \in G$. Notice
\[
u_{n+1}(\sigma t) = \sigma u_{n+1}(t) = \sigma z = z.
\]
So $u_{n+1}(\sigma t) = z$ for all $\sigma \in G$. It follows that
\[
u_{n+1}(d) = z \quad \text{where}  \ d = \rho^{M_{n+1}}(t).
\]
As $d \in M_{n+1}^G$ we have $\widetilde{u_{n+1}}(d) = z$. It follows that $z \in B_n(\C^G)$. So $\xi = 0$. Thus $\psi$ is an injective map.

(4) It is clear that $\image(\psi_n) \subseteq  H_n(\C)^G.$  Suppose $\xi \in H_n(\C)^G$. Say $\xi = z + B_n(\C)$.
Let $\sigma \in G$. As $\sigma \xi = \xi$ we have $\sigma z = z + v_{\sigma} $, where $v_\sigma \in B_n(\C)$.
It follows that
\[
y = \rho^{M_n}(z) = z + v, \quad \text{where} \ v = \frac{1}{|G|}\sum_{\sigma \in G}v_\sigma \in B_n(\C)
\]
Clearly $y \in Z_n(\C^G)$. Notice
\[
\psi_n( y + B_n(\C^G)) =  y + B_n(\C) = z + B_n(\C) = \xi.  
\]
The result follows.
\end{proof}
\section{Skew group rings and local cohomology}
Let $A$ be a commutative Noetherian ring and let $G \subseteq Aut(A)$ be a finite group with $|G|$ invertible in $A$. Let $A^G$ be the ring of invaritants of $G$. Let $A*G$ be the skew group ring of $A$ \wrt \ $G$. In this section we show that certain local cohomology modules over $A$ has a natural $A*G$-module structure. We then investigate some of its properties.

\begin{lemma}\label{local}
Let $M$  be an $A*G$-module and let $S \subseteq A^G$ be a multiplicatively closed set. Then
\begin{enumerate}[\rm (1)]
\item
$S^{-1}M$ is an $A*G$-module.
\item
$S^{-1}M^G$ can be naturally identified with a subset of $S^{-1}M$ and with this identification we have
$(S^{-1}M)^G = S^{-1}M^G$.
\end{enumerate}
\end{lemma}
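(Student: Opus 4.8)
The plan is to first equip the localization with a $G$-action and then identify the invariants.

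For (1), the point is that $S \sub A^G$, so every $s \in S$ is fixed by $G$; this makes the naive formula
\[
\sigma\!\left(\frac{m}{s}\right) = \frac{\sigma(m)}{s}, \qquad \sigma \in G,\ m \in M,\ s \in S,
\]
a candidate action. First I would check it is well defined: if $m/s = m^\pr/s^\pr$ then $t(s^\pr m - s m^\pr) = 0$ for some $t \in S$, and applying $\sigma$ together with $\sigma(t)=t$, $\sigma(s)=s$, $\sigma(s^\pr)=s^\pr$ gives $t(s^\pr \sigma(m) - s\sigma(m^\pr)) = 0$, i.e. $\sigma(m)/s = \sigma(m^\pr)/s^\pr$. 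The same recipe $\sigma(a/u) = \sigma(a)/u$ defines an action of $G$ on $S^{-1}A$, the natural map $A \rt S^{-1}A$ is $G$-equivariant, and a short fraction computation gives
\[
\sigma\!\left(\frac{a}{u}\cdot\frac{m}{s}\right) = \frac{\sigma(a)\sigma(m)}{us} = \sigma\!\left(\frac{a}{u}\right)\sigma\!\left(\frac{m}{s}\right),
\]
since $us \in A^G$. Recalling that an $A*G$-module is the same thing as an $A$-module with a compatible $G$-action, this makes $S^{-1}M$ an $(S^{-1}A)*G$-module; restricting scalars along the induced ring homomorphism $A*G \rt (S^{-1}A)*G$ gives $S^{-1}M$ the structure of an $A*G$-module, proving (1).

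For (2), apply the exact functor $S^{-1}(-)$ to the inclusion $0 \rt M^G \rt M$ of $A^G$-modules; this yields an injection $S^{-1}(M^G) \rt S^{-1}M$, and I identify $S^{-1}(M^G)$ with its image $\{\, m/s \mid m \in M^G,\ s \in S \,\} \sub S^{-1}M$. One inclusion is immediate: if $m \in M^G$ then $\sigma(m/s) = \sigma(m)/s = m/s$ for all $\sigma$, so $S^{-1}(M^G) \sub (S^{-1}M)^G$. For the reverse inclusion I would use the Reynolds operator $\rho^{S^{-1}M}$ of the $A*G$-module $S^{-1}M$ constructed in part (1): since $|G|$ is invertible in $A$, hence in $S^{-1}A$, for any $m/s \in S^{-1}M$ we have
\[
\rho^{S^{-1}M}\!\left(\frac{m}{s}\right) = \frac{1}{|G|}\sum_{\sigma \in G}\frac{\sigma(m)}{s} = \frac{\rho^M(m)}{s},
\]
and $\rho^M(m) \in M^G$. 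Now if $x \in (S^{-1}M)^G$, write $x = m/s$; since $\rho^{S^{-1}M}$ restricts to the identity on $(S^{-1}M)^G$ we get $x = \rho^{S^{-1}M}(x) = \rho^M(m)/s \in S^{-1}(M^G)$. Hence $(S^{-1}M)^G \sub S^{-1}(M^G)$, and equality holds.

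There is no serious obstacle here: the content is just the observation that $S \sub A^G$ forces all denominators to be $G$-fixed, which is exactly what makes the naive action well defined, together with the fact that the Reynolds operator survives localization because $|G|$ stays invertible. If one prefers to avoid the Reynolds operator in (2), the reverse inclusion can instead be obtained by clearing denominators: from $\sigma(m)/s = m/s$ pick $u_\sigma \in S$ with $u_\sigma(\sigma(m) - m) = 0$, set $u = \prod_{\sigma \in G} u_\sigma \in S$, and note that $u \in A^G$ gives $\sigma(um) = u\sigma(m) = um$, so $um \in M^G$ and $x = um/(us) \in S^{-1}(M^G)$.
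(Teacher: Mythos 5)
Your proof is correct and follows essentially the same route as the paper: the action $\sigma(m/s)=\sigma(m)/s$ is well defined precisely because all denominators lie in $A^G$, one inclusion of (2) is immediate, and your ``clearing denominators'' alternative at the end is exactly the paper's argument for the reverse inclusion. Your primary argument for the reverse inclusion via the Reynolds operator (using that $|G|$ remains invertible after localization, so $x=\rho^{S^{-1}M}(x)=\rho^M(m)/s$) is a clean, equally valid minor variant.
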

\begin{proof}
(1) We first define a $G$-action on $S^{-1}M$. Let $\sigma \in G$ and let $\xi \in S^{-1}M$. If $\xi = m/s$ then we define $\sigma(\xi) = \sigma(m)/s$. We first show that this is well-defined. If $\xi = m_1/s_1 = m_2/s_2$ then there 
exists $s_3 \in S$ with $s_3s_2m_1 = s_3s_1m_2$. As $S \subseteq A^G$ we have $s_3s_2\sigma(m_1) = s_3s_1\sigma(m_2)$. So $\sigma(m_1)/s_1 = \sigma(m_2)/s_2$ in $S^{-1}M$. Thus the action of $G$ on $S^{-1}M$ is well-defined. It is easy to see it is a $G$-action on $S^{-1}M$.

Let $a \in A$ and let $\xi \in S^{-1}M$. Say $\xi = m/s$. Then
\[
\sigma(a \xi) = \sigma(a m)/s = \sigma(a)\sigma(m)/s = \sigma(a)\sigma(\xi).
\]
It follows that $S^{-1}M$ is an $A*G$-module.

(2) We have $M^G \subseteq M$ and this inclusion is $A^G$-linear. It follows that $S^{-1}M^G \subseteq S^{-1}M$.
Clearly we have $S^{-1}M^G \subseteq (S^{-1}M)^G$. 

Let $\xi \in (S^{-1}M)^G$. Say $\xi = m/s$. Then for every $\sigma \in G$ we have 
$\sigma(m)/s = m/s$. It follows that there exists $s_{\sigma }^\prime \in S$ such that $s_{\sigma}^\prime s \sigma(m) = s_{\sigma}^\prime s m$. Set $s_{\sigma} = ss_{\sigma}^\prime \in S$. Notice $\sigma(s_{\sigma}m) = s_{\sigma}m$.
Put
\[
\theta = \prod_{\sigma \in G}s_\sigma \in S.
\]
Notice $\sigma(\theta m) = \theta m$ for every $\sigma \in G$. So $\theta m \in M^G$. It follows that
\[
\xi = m/s = (\theta m)/(\theta s) \in S^{-1}M^G.
\]
\end{proof}
The main result in this section is the following:
\begin{theorem}\label{loc-invar}
Let $M$ be an $A*G$-module and let $I$ be an ideal in $A^G$. Then
\begin{enumerate}[\rm (1)]
\item
$H^i_{IA}(M)$ is an $A*G$-module for every $i \geq 0$.
\item
$H^i_{IA}(M)^G \cong H^i_I(M^G)$ for all $i \geq 0$.
\end{enumerate}
\end{theorem}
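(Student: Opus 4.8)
The plan is to build both parts from a single device: a flasque (or at least $\Gamma_I$-acyclic) resolution of $M$ in the category of $A*G$-modules, and then to apply the exact-invariants machinery already established. For part (1), I would first observe that $\Gamma_{IA}(M) = \{m \in M \mid I^nm = 0 \text{ for } n \gg 0\}$ is an $A*G$-submodule of $M$: since $I \subseteq A^G$, for $\sigma \in G$ and $m$ with $I^nm=0$ we get $I^n\sigma(m) = \sigma(I^nm) = 0$, so $\sigma$ preserves $\Gamma_{IA}(M)$, and the compatibility $\sigma(am) = \sigma(a)\sigma(m)$ is inherited. Thus $\Gamma_{IA}$ is a left-exact functor from $Mod(A*G)$ to itself. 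Next I would produce, for any $A*G$-module $M$, an injective resolution in $Mod(A*G)$ — or, more economically, note that an injective $A*G$-module $E$ is in particular an injective $A$-module (because $A*G$ is free, hence flat, as a left $A$-module, so $\Hom_A(-,E) = \Hom_{A*G}(A*G\otimes_A -, E)$ is exact), so its $A$-module local cohomology $H^i_{IA}(E)$ vanishes for $i>0$. Hence an injective resolution $M \rt \I$ in $Mod(A*G)$ computes $H^i_{IA}(M)$ when we apply $\Gamma_{IA}$, and each $\Gamma_{IA}(I^j)$ is an $A*G$-module; so the cohomology $H^i_{IA}(M) = H^i(\Gamma_{IA}(\I))$ is naturally an $A*G$-module. (One should remark that this $A*G$-structure agrees with the one coming from functoriality of ordinary local cohomology under the ring automorphisms $\sigma$, which is what makes it "natural," but for the statement as written the complex description suffices.)

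For part (2), I would run the invariants functor $(-)^G$ through the same resolution. By Lemma \ref{fixed-exact} the functor $(-)^G$ is exact on $Mod(A*G)$, and $(-)^G = \Hom_{A*G}(A,-)$ sends injective $A*G$-modules to $A^G$-modules that are $\Gamma_I$-acyclic — indeed, since $(-)^G$ is exact it commutes with everything in sight, and one checks directly that $(I^j)^G$ is $\Gamma_I$-acyclic over $A^G$ (alternatively, and more cleanly, I would verify that $(-)^G$ carries an injective $A*G$-module to an injective $A^G$-module, using $(-)^G = \Hom_{A*G}(A,-)$ together with the splitting $A^G \rt A$ given by the Reynolds operator, which makes $A^G$ a direct summand of $A$ as an $A^G$-module, so $\Hom_{A*G}(A, E) = \Hom_{A^G}(A^G, E^{\text{res}})$-type adjunctions behave well). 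Granting acyclicity, $M^G \rt (\I)^G$ is an $A^G$-injective (or $\Gamma_I$-acyclic) resolution of $M^G$, so $H^i_I(M^G) = H^i(\Gamma_I((\I)^G))$.

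It then remains to identify the two complexes. The key commutation is $\Gamma_{IA}(N)^G = \Gamma_I(N^G)$ for any $A*G$-module $N$: an element $m \in N^G$ is killed by a power of $IA$ iff it is killed by a power of $I$ (as $I$ and $IA$ have the same radical in the relevant sense on $A^G$-torsion), and conversely $\Gamma_{IA}(N)^G$ is exactly the $I$-power-torsion in $N^G$. Combining this with Theorem \ref{commute-hom} — applied to the complex $\C = \Gamma_{IA}(\I)$ of $A*G$-modules, whose cohomology in degree $i$ is $H^i_{IA}(M)$ — gives $H^i(\C^G) \cong H^i(\C)^G = H^i_{IA}(M)^G$. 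But $\C^G = \Gamma_{IA}(\I)^G = \Gamma_I((\I)^G)$ by the commutation just noted, and this last complex computes $H^i_I(M^G)$. Chaining the isomorphisms yields $H^i_{IA}(M)^G \cong H^i_I(M^G)$. The main obstacle I anticipate is the acyclicity bookkeeping in the second paragraph — pinning down precisely why $(\I)^G$ is an admissible (injective or $\Gamma_I$-acyclic) resolution of $M^G$ over $A^G$; everything else is either formal or already packaged in Lemmas \ref{fixed-exact}, \ref{local} and Theorem \ref{commute-hom}. Using the Reynolds splitting to show $(-)^G$ preserves injectives is the cleanest route and is the step I would write out most carefully.
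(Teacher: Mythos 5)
Your argument for part (1) is correct but takes a genuinely different route from the paper: you build the $A*G$-structure on $H^i_{IA}(M)$ from an injective resolution $\I$ in $Mod(A*G)$, using that injective $A*G$-modules are injective over $A$ (which is the paper's Lemma \ref{invar-inj}, stated later but logically independent). The paper instead works directly with the \v{C}ech complex on generators $f_1,\dots,f_s \in A^G$ of $I$: each localization $M_{f_{i_1}\cdots f_{i_k}}$ is an $A*G$-module by Lemma \ref{local} and the differentials are $A*G$-linear, so the cohomology inherits the structure. The \v{C}ech route is lighter and, crucially, it makes part (2) immediate, because Lemma \ref{local}(2) identifies $\C^G$ with the \v{C}ech complex of $M^G$ over $A^G$, and Theorem \ref{commute-hom} then gives $H^i_{IA}(M)^G \cong H^i_I(M^G)$ with no acyclicity questions arising.

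In your part (2) there is a genuine gap exactly where you anticipate one: you need $(\I^j)^G$ to be $\Gamma_I$-acyclic over $A^G$, and the route you single out as cleanest --- that $(-)^G = \Hom_{A*G}(A,-)$ carries injective $A*G$-modules to injective $A^G$-modules --- is false in general. That functor preserves injectives if and only if its left adjoint $A\otimes_{A^G}-$ is exact, i.e.\ if and only if $A$ is flat over $A^G$, which fails for most invariant rings; the Reynolds splitting only exhibits $E^G$ as an $A^G$-direct summand of $E$, and $E$ itself need not be injective over $A^G$. Concretely, let $A = K[[x,y]]$ with $G = \mathbb{Z}/3$ acting by a primitive cube root of unity on $x$ and $y$, and let $E = E_A(A/\m) \cong H^2_{\m}(A)$ with its natural $A*G$-structure; then $E$ is injective over $A*G$ (by averaging), but $E^G \cong H^2_{\m^G}(A^G)$ is the Matlis dual of the canonical module of the third Veronese ring, which is not Gorenstein, so $E^G$ is not injective. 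The weaker acyclicity statement you fall back on ("one checks directly that $(\I^j)^G$ is $\Gamma_I$-acyclic") is in fact true, but you give no argument for it, and the natural way to verify it is to apply the \v{C}ech complex and the exactness of $(-)^G$ to the injective $A$-module $\I^j$ --- that is, precisely the paper's argument, applied term by term. So your proof can be repaired, but only by importing the \v{C}ech computation that makes the injective-resolution scaffolding unnecessary in the first place.
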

\begin{proof}
(1) Let $I = (f_1,\ldots,f_s)$.
Consider the \textit{(modified)} \v{C}ech complex
\[
\C \colon 0\rt M \rt \bigoplus_{i=1}^{s}M_{f_i} \rt \cdots \rt M_{f_1\cdots f_s} \rt 0.
\]
\textit{Claim:} $\C$ is a complex of $A*G$-modules.

By Lemma \ref{local} each module in $\C$ is an $A*G$-module. So we have to prove that each differential in $\C$ is $A*G$-linear. To prove this it suffices to prove that if $f,g \in A^G$ then the natural map $\eta \colon M_f \rt M_{fg}$ is $A*G$-linear. Clearly $\eta$ is $A$-linear. Let $\xi \in M_f$. Say $\xi = m/f^i$. Then
$\sigma \xi = \sigma(m)/f^i$. So
$\eta(\sigma \xi) = g^i\sigma(m)/f^ig^i$. Notice
\[
\sigma \eta(\xi) = \sigma(g^im/f^ig^i) = g^i\sigma(m)/f^ig^i = \eta(\sigma \xi).
\]
Thus $\eta$ is $A*G$-linear. So $\C$ is a complex of $A*G$-modules. It follows that $H^i_{IA}(M)$ is an $A*G$-module for all $i \geq 0$.

(2). Note the complex $\C^G$ as defined in Theorem \ref{commute-hom} is the \v{C}ech complex on $M^G$. By Theorem \ref{commute-hom} it follows that $H^i_{IA}(M)^G \cong H^i_I(M^G)$ for all $i \geq 0$.
\end{proof}
As a consequence of the above Theorem we get the following:
\begin{corollary}\label{cor-commute-loc}
Let $I, I_1,\ldots, I_r$ be ideals in $A^G$. Then
\begin{enumerate}[\rm (1)]
\item
$H^i_{IA}(A)$ is an $A*G$-module for all $i \geq 0$. Furthermore
$$H^i_{IA}(A)^G \cong H^i_I(A^G).$$
\item
For all $i_j \geq 0$, where $j = 1,\ldots,r$, $H^{i_1}_{I_1A}(H^{i_2}_{I_2A}(\cdots H^{i_r}_{I_rA}(A) \cdots)$
is an $A*G$-module. Furthermore
$$ H^{i_1}_{I_1A}(H^{i_2}_{I_2A}(\cdots H^{i_r}_{I_rA}(A) \cdots)^G \cong H^{i_1}_{I_1}(H^{i_2}_{I_2}(\cdots H^{i_r}_{I_r}(A^G) \cdots).$$
\end{enumerate}
\end{corollary}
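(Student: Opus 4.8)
The plan is to deduce both parts from Theorem~\ref{loc-invar}, using the observation that $A$ itself is an $A*G$-module. Indeed, since $G \subseteq Aut(A)$, every $\sigma \in G$ satisfies $\sigma(ab) = \sigma(a)\sigma(b)$, which is precisely the compatibility condition required for an $A$-module carrying a $G$-action to be an $A*G$-module; moreover the invariant submodule of $A$ in this sense is exactly the ring of invariants $A^G$. So the entire apparatus of this section applies with $M = A$.

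For part (1) I would apply Theorem~\ref{loc-invar} directly with $M = A$ and the ideal $I \subseteq A^G$: this gives at once that $H^i_{IA}(A)$ is an $A*G$-module for every $i \geq 0$ and that $H^i_{IA}(A)^G \cong H^i_I(A^G)$.

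For part (2) I would argue by induction on $r$, the base case $r = 1$ being part (1). For the inductive step, put $N = H^{i_2}_{I_2A}(\cdots H^{i_r}_{I_rA}(A) \cdots)$. By the inductive hypothesis $N$ is an $A*G$-module and $N^G \cong H^{i_2}_{I_2}(\cdots H^{i_r}_{I_r}(A^G) \cdots)$ as $A^G$-modules. Applying Theorem~\ref{loc-invar} to the $A*G$-module $N$ and the ideal $I_1 \subseteq A^G$, we obtain that $H^{i_1}_{I_1A}(N)$ is an $A*G$-module and
\[
H^{i_1}_{I_1A}(N)^G \;\cong\; H^{i_1}_{I_1}(N^G) \;\cong\; H^{i_1}_{I_1}\bigl( H^{i_2}_{I_2}(\cdots H^{i_r}_{I_r}(A^G) \cdots) \bigr),
\]
where the last isomorphism holds because $H^{i_1}_{I_1}(-)$ sends isomorphic $A^G$-modules to isomorphic $A^G$-modules. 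This is exactly the claim for $r$, completing the induction.

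There is no real obstacle here: all the substance sits in Theorem~\ref{loc-invar} (which in turn rests on Theorem~\ref{commute-hom}). The only points needing a word of care are checking that $A$ qualifies as an $A*G$-module so that the theorem may be invoked, and keeping track of the chain of isomorphisms as the induction unwinds.
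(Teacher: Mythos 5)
Your proposal is correct and follows exactly the paper's own argument: part (1) is Theorem~\ref{loc-invar} applied to $M = A$, and part (2) is the same induction on $r$, applying Theorem~\ref{loc-invar} to the inner composite local cohomology module. No issues.
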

\begin{proof}
(1) This follows from Theorem \ref{loc-invar} since $A$ is an $A*G$-module.

(2) We prove this result by induction on $r$. For $r = 1$ this is just part (1).
Assume the result for $r-1$ where $r \geq 2$.  Fix $i_2,\cdots, i_r \geq 0$. Set 
$$M = H^{i_2}_{I_2A}(\cdots H^{i_r}_{I_rA}(A) \cdots).$$
By induction hypotheses $M$ is an $A*G$-module and 
\[
M^G \cong H^{i_2}_{I_2}(\cdots H^{i_r}_{I_r}(A^G) \cdots).
\]
By Theorem \ref{loc-invar} it follows that for all $i_1 \geq 0$, $H^{i_1}_{I_1A}(M)$ is an $A*G$-module and 
\[
H^{i_1}_{I_1A}(M)^G \cong H^{i_1}_{I_1}(M^G).
\]
The result follows.
\end{proof}
We also need the following Lemma.
\begin{lemma}\label{aut}
Let $A$ be a domain and let $G \subseteq Aut(A)$ be a finite group. Let $S$ be a multiplicatively closed subset of $A^G$. Let $G$ act on $S^{-1}A$ as given in Lemma \ref{local}. Then  the natural map $G \rt Aut(S^{-1}A)$ is injective.  
\end{lemma}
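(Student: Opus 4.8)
The plan is to reduce the whole statement to the elementary fact that localization of a domain is injective. First I would dispose of the degenerate possibility $0 \in S$: in that case $S^{-1}A = 0$ and the claim is vacuous, so (as in every application of this lemma in the paper) one assumes $0 \notin S$, equivalently $S^{-1}A \neq 0$. Then $S^{-1}A$ is a nonzero subring of the field of fractions of $A$, and the canonical localization homomorphism $\iota \colon A \rt S^{-1}A$, $a \mapsto a/1$, is injective: if $a/1 = 0$ then $sa = 0$ for some $s \in S$, and since $A$ is a domain and $s \neq 0$, we conclude $a = 0$. This is the only place the domain hypothesis is used.

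Next I would record that the localization map $\iota$ is $G$-equivariant for the action on $S^{-1}A$ produced in Lemma \ref{local}. Indeed, by the explicit description of that action (which relies on $S \subseteq A^G$), for $\sigma \in G$ and $a \in A$ we have $\sigma(\iota(a)) = \sigma(a/1) = \sigma(a)/1 = \iota(\sigma(a))$. One also notes, again from Lemma \ref{local}, that each $\sigma$ acts on $S^{-1}A$ as a ring automorphism, so $\sigma \mapsto (\text{action of } \sigma)$ really is a group homomorphism $G \rt Aut(S^{-1}A)$, and it suffices to show its kernel is trivial.

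So suppose $\sigma \in G$ lies in this kernel, i.e. $\sigma$ acts as the identity on $S^{-1}A$. Then for every $a \in A$,
\[
\iota(\sigma(a)) = \sigma(\iota(a)) = \iota(a),
\]
and injectivity of $\iota$ forces $\sigma(a) = a$. Hence $\sigma$ fixes $A$ pointwise, so $\sigma = \mathrm{id}_A$; since $G$ is by hypothesis a subgroup of $Aut(A)$ (equivalently, distinct elements of $G$ act differently on $A$), this means $\sigma$ is the identity element of $G$. Therefore the kernel is trivial and $G \rt Aut(S^{-1}A)$ is injective.

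There is no genuine obstacle here: the argument is routine, and the only subtle points are the harmless implicit hypothesis $0 \notin S$ and the use of the domain assumption to guarantee injectivity of $A \rt S^{-1}A$.
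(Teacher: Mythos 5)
Your proposal is correct and follows essentially the same route as the paper: use the explicit action $\sigma(a/1)=\sigma(a)/1$ from Lemma \ref{local} and the injectivity of $A \rt S^{-1}A$ (which is where the domain hypothesis enters) to conclude that an element acting trivially on $S^{-1}A$ already fixes $A$ pointwise. Your additional remarks on the case $0\in S$ and on the map being a group homomorphism are harmless elaborations of what the paper leaves implicit.
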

\begin{proof}
Suppose $\sigma \in G$ such that the map $\sigma\colon S^{-1}A \rt S^{-1}A$ is the identity.
So $\sigma(\xi) = \xi$ for every $\xi \in S^{-1}A$. Let $a \in A$. Then $\sigma(a/1) = a/1.$ This gives
$\sigma(a)/1 = a/1$. As $A$ is a domain we have $\sigma(a) = a$. It follows that $\sigma$ is the identity.
\end{proof}

\section{equivariant injective resolution}
In this section $A$ is a normal domain with quotient field $L$. Also $G$ is a finite subgroup of the  group of automorphisms of $A$. We assume that $|G|$ is invertible in $A$. Let $A^G$ be the ring of invariants of $G$. Then $A^G$ is normal, see \cite[6.4.1]{BH}. Let $F$ be the quotient field of $A^G$. Note that $G$ acts on $L$ and $L^G = F$. Thus $L$ is a Galois extension of $F$ and the Galois group is $G$. Let $\m$ be a maximal ideal of $A^G$. Let $\n_1,\ldots,\n_r$ be \emph{all} the maximal ideals of $A$ lying above $\m$. By \cite[9.3]{Mat} for $i,j$; there exists $\sigma_i^j \in G$ such that $\sigma_i^j(\n_i) = \n_j$. Our main result in this section is:
\begin{theorem}\label{invar-co}
(with hypotheses as above). Let $M$ be an $A*G$-module. Then for $n \geq 0$,
\begin{enumerate}[\rm (1)]
\item
 the local cohomology module $H^n_{\m A}(M)$ is an $A*G$-module.
\item
As $A$-modules,
 $$H^n_{\m A}(M) = \bigoplus_{i = 1}^{r}\Gamma_{\n_i}\left(H^n_{\m A}(M) \right).$$
 \item
 For $i = 1,\ldots, r$;
 $$ \Gamma_{\n_i}\left(H^n_{\m A}(M) \right)  \cong H^n_{\n_i}(M).$$
 \item
 If $\sigma_i^j(\n_i) = \n_j$ then
 \[
 \sigma_i^j\left(\Gamma_{\n_i}\left(H^n_{\m A}(M) \right) \right) = \Gamma_{\n_j}\left(H^n_{\m A}(M)\right).
 \]
\end{enumerate}
\end{theorem}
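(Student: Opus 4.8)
The plan is to obtain part~(1) directly from Theorem~\ref{loc-invar}, to reduce parts~(2) and~(3) to a single statement about torsion functors, and then to get part~(4) from a short semilinearity computation. First, $\m A$ is extended from the ideal $\m$ of $A^G$, so part~(1) is exactly Theorem~\ref{loc-invar}(1) applied with the ideal $\m$.

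Next I would identify the radical of $\m A$. Since $A$ is integral over $A^G$, any prime of $A$ containing $\m A$ contracts to $\m$ (the latter being maximal), and by incomparability such a prime is itself maximal; hence the primes of $A$ over $\m A$ are exactly $\n_1,\dots,\n_r$, and this list is finite because $G$ permutes these primes transitively via the $\sigma_i^j$, so $r\le |G|$. Therefore $\sqrt{\m A}=\n_1\cap\cdots\cap\n_r$ and the $\n_i$ are pairwise comaximal. I would then record the elementary fact that for pairwise comaximal ideals $J_1,\dots,J_r$ of $A$ one has, as subfunctors of the identity functor on the category of $A$-modules,
\[
\Gamma_{J_1\cdots J_r}=\bigoplus_{i=1}^{r}\Gamma_{J_i};
\]
indeed, if $x$ is killed by $(J_1\cdots J_r)^t=J_1^t\cdots J_r^t$, writing $1=e_1+\cdots+e_r$ with $e_i\in\prod_{k\ne i}J_k^t$ gives $x=\sum_i e_i x$ with $J_i^t(e_i x)=0$, and the sum is direct because an element that is simultaneously $J_i$-torsion and $\prod_{k\ne i}J_k$-torsion is annihilated by comaximal ideals, hence is $0$. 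Applying this with $J_i=\n_i$ and using $\Gamma_{\m A}=\Gamma_{\sqrt{\m A}}=\Gamma_{\n_1\cdots\n_r}$ yields $\Gamma_{\m A}=\bigoplus_{i=1}^{r}\Gamma_{\n_i}$ as functors.

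Taking $n$-th right derived functors of this identity --- legitimate since the sum is finite, so one computes with an injective resolution and uses that cohomology commutes with finite direct sums --- gives, for every $A$-module $M$ and every $n$,
\[
H^n_{\m A}(M)=\bigoplus_{i=1}^{r}H^n_{\n_i}(M)
\]
as $A$-modules. Each summand $H^n_{\n_i}(M)$ is $\n_i$-torsion, and for $i\ne j$ any element that is both $\n_i$-torsion and $\n_j$-torsion is annihilated by comaximal ideals and so is $0$; hence $\Gamma_{\n_j}(H^n_{\m A}(M))=H^n_{\n_j}(M)$, which is part~(3), and substituting this back into the displayed decomposition gives part~(2).

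Finally, for part~(4), recall from part~(1) that $H^n_{\m A}(M)$ is an $A*G$-module, so every $\sigma\in G$ acts on it $A$-semilinearly: $\sigma(a\xi)=\sigma(a)\sigma(\xi)$. If $\xi\in\Gamma_{\n_i}(H^n_{\m A}(M))$, say $\n_i^t\xi=0$, then since $\sigma:=\sigma_i^j$ is a ring automorphism of $A$ with $\sigma(\n_i)=\n_j$ we get $\n_j^t\,\sigma(\xi)=\sigma(\n_i)^t\sigma(\xi)=\sigma(\n_i^t\xi)=0$, so $\sigma(\xi)\in\Gamma_{\n_j}(H^n_{\m A}(M))$; applying the same reasoning to $\sigma^{-1}$, which carries $\n_j$ to $\n_i$, gives the reverse inclusion, hence equality. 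The only step carrying real content is the functorial splitting $\Gamma_{\m A}=\bigoplus_i\Gamma_{\n_i}$ together with its passage to derived functors; once that is in place, (2)--(4) are formal unwindings. One could instead argue with the modified \v{C}ech complex and Mayer--Vietoris, but I expect the functorial route to be cleaner.
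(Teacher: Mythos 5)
Your proof is correct and follows essentially the same route as the paper: both rest on the fact that $\sqrt{\m A}$ is the intersection of the pairwise comaximal maximal ideals $\n_1,\dots,\n_r$, so that the torsion functor $\Gamma_{\m A}$ splits as $\bigoplus_i \Gamma_{\n_i}$, and this splitting passes to cohomology. The only real difference is in the packaging. For (2)--(3) the paper takes an injective resolution $\EE$ of $M$ as an $A*G$-module (which is $A$-injective by Lemma \ref{invar-inj}) and verifies by an explicit computation (its ``Claim 1'') that the differentials of $\Gamma_{\m A}(\EE)$ have no cross terms; you get the same conclusion by observing that the comaximal splitting is a decomposition of \emph{functors}, so the derived functors decompose automatically --- a slightly cleaner way of saying the same thing. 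For (4) the paper again works at the level of the equivariant resolution, showing $\sigma_i^j$ carries $\Gamma_{\n_i}(\EE^n)$ to $\Gamma_{\n_j}(\EE^n)$ and then descends to cohomology, whereas you apply the identical semilinearity computation directly to the $A*G$-module $H^n_{\m A}(M)$ furnished by part (1); this bypasses the equivariant resolution (and hence Lemma \ref{invar-inj}) entirely and is, if anything, a small simplification. Your supporting observations --- that $A$ is integral over $A^G$ (so primes over $\m A$ are exactly the maximal ideals over $\m$), the partition of unity $1=e_1+\cdots+e_r$ with $e_i\in\prod_{k\neq i}\n_k^t$, and the directness of the sum via comaximality --- are all correct.
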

We need a few preliminaries before we prove this result.
\begin{remark}
Clearly $A*G$ is free as a left $A$-module. Note that for any $a \in A$ and 
$\sigma \in G$ we have $\sigma a = \sigma(a) \sigma $. Also $\sigma \colon A \rt A$ is an automorphism. It follows that $A*G$ is also free as a right $A$-module. 
\end{remark}
A significant consequence of the above remark is the following:
\begin{lemma}\label{invar-inj}
Let $E$ be an injective $A*G$-module. Then $E$ is injective as an $A$-module.
\end{lemma}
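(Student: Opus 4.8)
The statement to prove is Lemma~\ref{invar-inj}: if $E$ is an injective $A*G$-module, then $E$ is injective as an $A$-module.

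\bigskip

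The plan is to exploit the adjunction between restriction and coinduction for the ring extension $A \hookrightarrow A*G$, together with the fact recorded in the preceding remark that $A*G$ is free (hence flat) as a right $A$-module. Concretely, for an $A$-module $N$ there is the coinduced $A*G$-module $\Hom_A(A*G, N)$, and the classical adjunction isomorphism
\[
\Hom_{A*G}\bigl(L, \Hom_A(A*G, N)\bigr) \cong \Hom_A(L|_A, N)
\]
holds naturally in the $A*G$-module $L$, where $L|_A$ denotes restriction of scalars. Since $A*G$ is finitely generated free as a left $A$-module, $\Hom_A(A*G,N)$ is, as a left $A*G$-module, also describable as $(A*G)\otimes_A N$ up to the usual twist, but the cleaner route is purely formal: restriction of scalars $\mathrm{Mod}(A*G)\to\mathrm{Mod}(A)$ is exact (obvious) and, because $A*G$ is flat as a right $A$-module, it also \emph{preserves injectives when viewed the other way}—that is, the left adjoint $A*G\otimes_A(-)$ of restriction is exact, which is exactly the condition guaranteeing that restriction sends injectives to injectives.

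\bigskip

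So the key steps, in order, are: (1) observe that restriction of scalars $r\colon \mathrm{Mod}(A*G)\to\mathrm{Mod}(A)$ has a left adjoint, namely $A*G\otimes_A(-)$; (2) invoke the remark preceding the lemma to conclude $A*G$ is flat (indeed free) as a right $A$-module, so this left adjoint is an exact functor; (3) apply the standard homological fact that a right adjoint of an exact functor preserves injective objects—hence $r(E)=E|_A$ is an injective $A$-module whenever $E$ is an injective $A*G$-module. Alternatively, and perhaps more in keeping with the elementary style of this section, one can argue directly with the lifting criterion: given an injective $A$-module map $\iota\colon N'\hookrightarrow N$ and an $A$-linear map $\varphi\colon N'\to E|_A$, one forms the induced $A*G$-linear map $A*G\otimes_A N'\to A*G\otimes_A N$ (still injective by flatness), uses adjunction to turn $\varphi$ into an $A*G$-linear map out of $A*G\otimes_A N'$, extends along the injection using injectivity of $E$ over $A*G$, and transports back along the adjunction to get the desired $A$-linear extension $N\to E|_A$; one checks it restricts to $\varphi$ on $N'$.

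\bigskip

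The only genuine point requiring care—the ``hard part,'' though it is soft—is making sure the left adjoint is built correctly and is exact: one must use that $A*G$ is free as a \emph{right} $A$-module (this is precisely the content of the remark, obtained from $\sigma a = \sigma(a)\sigma$ and the bijectivity of each $\sigma$), so that $A*G\otimes_A(-)$ is exact on $\mathrm{Mod}(A)$. Everything else is the formal adjunction calculus and the standard ``exact left adjoint $\Rightarrow$ right adjoint preserves injectives'' principle. I would write the proof in the second, explicit form so that no external categorical black box is invoked beyond what is already implicit in the paper.
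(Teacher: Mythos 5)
Your argument is correct and is essentially the paper's own proof: both rest on the adjunction $\Hom_A(N,E)\cong\Hom_{A*G}(A*G\otimes_A N,E)$ together with the exactness of $A*G\otimes_A(-)$, which follows from $A*G$ being free as a right $A$-module. The opening digression about the coinduction adjunction $\Hom_{A*G}(L,\Hom_A(A*G,N))\cong\Hom_A(L|_A,N)$ is not the relevant one (it shows coinduction, not restriction, preserves injectives), but you correctly discard it and the steps you actually carry out match the paper.
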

\begin{proof}
Notice
\begin{align*}
\Hom_A(-, E) &= \Hom_A(-, \Hom_{A*G}(A*G,E)), \\
             &= \Hom_{A*G}(A*G\otimes_A -, E).
\end{align*}
As $A*G$ is free as a right $A$-module we have that $A*G\otimes_A-$ is an exact functor from $Mod(A)$ to $Mod(A*G)$. Also by hypothesis $E$ is an injective $A*G$-module. It follows that $\Hom_A(-, E)$ is an exact functor. So $E$ is injective as an $A$-module. 
\end{proof}
We now give 
\begin{proof}[Proof of Theorem \ref{invar-co}]
The assertion (1) follows from Theorem \ref{loc-invar}.

Let $\EE$ be an injective resolution of $M$ as an $A*G$-module. By \ref{invar-inj} it is also an injective resolution of $M$ as an $A$-module. Notice as $\sqrt{\m A} = \n_1\cdots\n_r$ and as they are co-maximal we have
\[
\Gamma_{\m A}(\EE^n) = \bigoplus_{i=1}^{r} \Gamma_{\n_i}(\EE^n).
\]
Let $d^n$ be the $n^{th}$ differential of the complex $\Gamma_{\m A}(\EE)$. Write $d^n = (u_{i,n}^j)$ where
$u_{i,n}^j \colon \Gamma_{\n_i}(\EE^n) \rt \Gamma_{\n_j}(\EE^{n+1})$ is $A$-linear.

\textit{Claim 1}: $u_{i,n}^j = 0$ for $i \neq j$. \\
\textit{Proof of Claim 1:} Let $a \in \Gamma_{\n_i}(\EE^n)$. Then $u_{i,n}^j(a) \in \Gamma_{\n_j}(\EE^{n+1})$.
So there exists $t_1,t_2$ such that $\n_i^{t_1}a = 0$ and $\n_j^{t_2} u_{i,n}^j(a) = 0$. As $i \neq j$ we have
$\n_i^{t_1} + \n_j^{t_2} = A$. So $1 = c_i + c_j$ where $c_i \in \n_{i}^{t_1}$ and $c_j \in \n_{j}^{t_2}$.
So $a = c_ia + c_ja = c_ja$. Therefore
\[
u_{i,n}^j(a) = c_ju_{i,n}^j(a) = 0.
\]
As a consequence of Claim 1 we get that the module of $n$-cocycles
\[
Z^n(\Gamma_{\m A}(\EE)) = \bigoplus_{i=1}^{r}Z^n(\Gamma_{\n_i}(\EE)),
\]
and the module of $n$-coboundaries
\[
B^n(\Gamma_{\m A}(\EE)) = \bigoplus_{i=1}^{r}B^n(\Gamma_{\n_i}(\EE)).
\]
It follows that
\[
H^n_{\m A}(M) = \bigoplus_{i=1}^{r}H^n_{\n_i}(M).
\]

(2),(3): It follows from above that $\Gamma_{\n_i}(H^n_{\m A}(M)) = H^n_{\n_i}(M)$.

(4). Let $\sigma_{i}^j(\n_i) = \n_j$. We first assert that $\sigma_i^j(\Gamma_{\n_i}(\EE^n)) = \Gamma_{\n_j}(\EE^n)$.
Let $\xi \in  \Gamma_{\n_i}(\EE)$. So $\n_i^s \xi = 0$ for some $s \geq 1$. Also note that $\sigma_{i}^j(\n_i^s) = \n_j^s$. Let $b \in \n_j^s$. There exists $a \in \n_i^s$ with $\sigma_i^j(a) = b$.
Notice
\[
b\sigma_i^j(\xi) = \sigma_i^j(a)\sigma_i^j(\xi) = \sigma_i^j(a \xi) = 0.
\]
Thus $\sigma_i^j(\xi) \in \Gamma_{\n_j}(\EE^n)$. Thus $\sigma_i^j(\Gamma_{\n_i}(\EE^n)) \subseteq \Gamma_{\n_j}(\EE^n)$. By considering $(\sigma_i^j)^{-1}$ we get
$(\sigma_i^j)^{-1}(\Gamma_{\n_j}(\EE^n)) \subseteq \Gamma_{\n_i}(\EE^n)$. Thus  $\sigma_i^j(\Gamma_{\n_i}(\EE^n)) = \Gamma_{\n_j}(\EE^n)$.

As $d^n$ is $A*G$-linear, the following diagram is commutative:
\[
  \xymatrix
{
  & \Gamma_{\n_i}(\EE^n) 
    \ar@{->}[d]^{\sigma_i^j}
\ar@{->}[r]^{u^i_{i,n}}
 & \Gamma_{\n_i}(\EE^{n+1})  
    \ar@{->}[d]^{\sigma_i^j}
 \\
  & \Gamma_{\n_j}(\EE^n) 
\ar@{->}[r]^{u^i_{i,n}}
 & \Gamma_{\n_j}(\EE^{n+1})     
 }
\]

In a similar way we can prove that
\[
\sigma_i^j(Z^n(\Gamma_{\n_i}(\EE))) = Z^n(\Gamma_{\n_j}(\EE)) \quad \text{and} \ \sigma_i^j(B^n(\Gamma_{\n_i}(\EE))) = B^n(\Gamma_{\n_j}(\EE)).
\]
It follows that 
\[
\sigma_i^j(H^n_{\n_i}(M)) = H^n_{\n_j}(M).
\]

\end{proof}
\section{A Crucial Lemma}
In this section we prove  a lemma which will play a crucial part in the proof  of Theorem \ref{main}. 
\begin{lemma}\label{bass-prep}
Let $K$ be a field  and let $R$ be a regular domain containing $K$. 
Let $G$ be a finite subgroup of the group of  automorphisms of $R$. We assume that $|G|$ is invertible in $K$. Let $R^G$ be the ring of invariants of $G$. 
Let $I_1,I_2,\cdots, I_r$ be ideals in $R^G$. Set $T(R^G) = H^{i_1}_{I_1}(H^{i_2}_{I_2}(\cdots H^{i_r}_{I_r}(R^G) \cdots)$  for some $i_1,\cdots,i_r \geq 0$.
Let $P$ be a prime ideal of $R^G$ with $\height P = g$. Then for all $j \geq 0$, 
\[
\left(H^j_P(T(R^G))\right)_P = H^g_{PR^G_P}(R^G_P)^{s} \quad \text{for some finite} \ s \geq 0.
\]
\end{lemma}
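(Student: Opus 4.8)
Put $S=R^G\setminus P$, $B=R^G_P$ with maximal ideal $\m=PB$, and $A=S^{-1}R$. Then $A$ is a regular domain containing $K$, $G$ acts on $A$ by Lemma~\ref{local} with $G\to\operatorname{Aut}(A)$ injective by Lemma~\ref{aut}, $|G|$ is invertible in $A$, and $A^G=S^{-1}R^G=B$ by Lemma~\ref{local}. Since $A$ is integral over $B$ and $G$ acts transitively on the primes of $A$ over $\m$, these primes $\mathfrak{q}_1,\dots,\mathfrak{q}_m$ are finite in number, form a single $G$-orbit, are exactly the maximal ideals of $A$, and each has height $g=\dim B$. Local cohomology commutes with the flat map $R^G\to B$, and with localization it commutes with the formation of $T$; so $T(R^G)_P\cong T(A)^G$ by Corollary~\ref{cor-commute-loc}(2), and by Theorem~\ref{loc-invar}(2)
\[
\bigl(H^j_P(T(R^G))\bigr)_P\;\cong\;H^j_{\m A}(T(A))^G\;=:\;W^G,\qquad W:=H^j_{\m A}(T(A)),
\]
an $A*G$-module by Corollary~\ref{cor-commute-loc} and Theorem~\ref{loc-invar}(1).

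\textbf{The structure of $W$ over $A$.} By Theorem~\ref{invar-co}, $W=\bigoplus_{i=1}^m\Gamma_{\mathfrak{q}_i}(W)$ with $\Gamma_{\mathfrak{q}_i}(W)\cong H^j_{\mathfrak{q}_i}(T(A))$, the summands being permuted transitively by the $\sigma^j_i$. Since $A$ is regular, contains a field, and $T(A)$ is an iterated local cohomology module of $A$, the theorems of Huneke--Sharp (characteristic $p$) and Lyubeznik (characteristic $0$) give that each $H^j_{\mathfrak{q}_i}(T(A))$ is injective over $A$, is $\mathfrak{q}_i$-torsion, and has a finite Bass number $t_i$; thus $H^j_{\mathfrak{q}_i}(T(A))\cong E_A(A/\mathfrak{q}_i)^{t_i}$, and the $\sigma^j_i$ force $t_i=t$ to be independent of $i$. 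As $\height\mathfrak{q}_i=g$ we have $E_A(A/\mathfrak{q}_i)\cong H^g_{\mathfrak{q}_i}(A)$, so that with $E:=H^g_{\m A}(A)\cong\bigoplus_i H^g_{\mathfrak{q}_i}(A)$ (Theorem~\ref{invar-co} for $M=A$) we get $W\cong E^{\,t}$ as $A$-modules. Moreover $E$ is an $A*G$-module with $E^G\cong H^g_{\m}(B)$ by Theorem~\ref{loc-invar}(2); since $G$ permutes the $A$-indecomposable summands $H^g_{\mathfrak{q}_i}(A)$ of $E$ transitively, $E$ is indecomposable over $A*G$. Finally, since $|G|$ is invertible an $A*G$-module that is injective over $A$ is injective over $A*G$ (average an $A$-linear extension, as in the proof of Lemma~\ref{invar-inj}); hence $W$ and $E$ are injective $A*G$-modules.

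\textbf{The key step and how it finishes the proof.} I claim $W\cong E^{\,t}$ \emph{as $A*G$-modules}. Granting this, applying the exact functor $(-)^G$ (Lemma~\ref{fixed-exact}), which commutes with finite direct sums, gives $W^G\cong(E^G)^{\,t}\cong H^g_{\m}(B)^{\,t}$, which is the assertion with $s=t$. To prove the claim, take a minimal injective resolution $\mathbf{I}^\bullet$ of $T(A)$ over $A*G$; by Lemma~\ref{invar-inj} it is an injective resolution over $A$. For each $p$, $H^p_{\m A}(T(A))=\bigoplus_i H^p_{\mathfrak{q}_i}(T(A))$ is injective over $A$ (Huneke--Sharp/Lyubeznik), hence over $A*G$. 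An induction on $p$ then shows every differential of $\Gamma_{\m A}(\mathbf{I}^\bullet)$ vanishes: $\Gamma_{\m A}(\mathbf{I}^p)$ is the injective hull over $A*G$ of $\Gamma_{\m A}(\ker d^p)$, and if $\Gamma_{\m A}(d^{p-1})=0$ then $H^p_{\m A}(T(A))=\Gamma_{\m A}(\ker d^p)$ is an injective submodule of it, hence all of it, forcing $\Gamma_{\m A}(d^p)=0$. Therefore $W=\Gamma_{\m A}(\mathbf{I}^j)$ is an injective $A*G$-summand of $\mathbf{I}^j$, $\m A$-torsion and restricting to $E^{\,t}$ on $A$; it remains to see that the only indecomposable injective $A*G$-module supported on $\{\mathfrak{q}_1,\dots,\mathfrak{q}_m\}$ occurring in $W$ is $E=H^g_{\m A}(A)$ itself, which amounts to identifying the $G$-action on $\operatorname{Soc}_{A*G}(W)$. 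For this one uses that $T(A)$, hence $\mathbf{I}^\bullet$, is built from $A$ by iterating the functors $H^\bullet_{I_kA}(-)$, computed (proof of Theorem~\ref{loc-invar}(1)) by \v{C}ech complexes on localizations of $A$ at elements of $A^G$, which carry their untwisted $A*G$-structure; one shows this untwistedness propagates so that $W\cong E^{\,t}$ over $A*G$.

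\textbf{Where the difficulty lies.} The last point of the preceding paragraph is the genuine obstacle. Over $A*G$ there can be several pairwise non-isomorphic indecomposable injective modules supported on the orbit $\{\mathfrak{q}_i\}$ (governed by the inertia of $G$ along $\mathfrak{q}_1$), and — when $B$ is not Gorenstein — their $G$-invariants need \emph{not} be isomorphic to $H^g_{\m}(B)$; so it is precisely the isomorphism $W\cong E^{\,t}$ over $A*G$, and not merely over $A$, that makes the stated formula correct, and this has to be extracted from the concrete way $T(A)$ is manufactured out of $A$ (equivalently, from the Frobenius structure in characteristic $p$ and from the holonomic $D$-module structure in characteristic $0$). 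Everything else is bookkeeping with Theorems~\ref{commute-hom},~\ref{loc-invar},~\ref{invar-co} and the Huneke--Sharp/Lyubeznik finiteness theorems.
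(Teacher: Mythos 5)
Your reduction to the semilocal ring $A=S^{-1}R$, the decomposition $W=\bigoplus_i\Gamma_{\mathfrak{q}_i}(W)$ with transitively permuted summands, and the identification $\Gamma_{\mathfrak{q}_i}(W)\cong E_A(A/\mathfrak{q}_i)^{t}$ with $t$ constant along the orbit all match the paper's argument, which runs through Theorem \ref{invar-co}, Proposition \ref{tor-loc} and Lyubeznik's structure theorems in exactly this way. The problem is that your proof stops at the one step that carries all the content. You assert, and correctly flag as ``the genuine obstacle,'' that one must show $W\cong E^{\,t}$ \emph{as $A*G$-modules} and not merely as $A$-modules, and your entire account of that step is the sentence ``one shows this untwistedness propagates.'' That is not a proof. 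The issue is real: writing $H$ for the decomposition group of $\mathfrak{q}_1$, one has $W^G\cong\Gamma_{\mathfrak{q}_1}(W)^H$ and $E^G\cong E_A(A/\mathfrak{q}_1)^H$, so everything hinges on the $H$-action on $\Gamma_{\mathfrak{q}_1}(W)\cong E_A(A/\mathfrak{q}_1)^{t}$ being the natural diagonal one up to isomorphism. For an abstract $A*G$-module with all the properties you have established this can fail when inertia is nontrivial: for $R=K[[x,y]]$ with $\mathbb{Z}/3$ acting by $x\mapsto\omega x$, $y\mapsto\omega y$, the three character twists of $E=H^2_{(x,y)}(R)$ are isomorphic as $R$-modules, are $A*G$-injective and $\m$-torsion, yet their modules of invariants are the Matlis duals of the three divisor classes of the Veronese subring $R^G$, which are pairwise non-isomorphic because $R^G$ is not Gorenstein. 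So the stated formula genuinely depends on ruling out such a twist for the specific module $W=H^j_{\m A}(T(A))$, and your proposal contains no argument that does so. The detour through a minimal $A*G$-injective resolution of $T(A)$ and the vanishing of the differentials of $\Gamma_{\m A}(\mathbf{I}^\bullet)$ only re-expresses the problem (it shows $W$ is an $\m A$-torsion injective $A*G$-module, which the twists of $E^{\,t}$ also are); it does not identify the socle action.

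For comparison, the paper's proof reaches the same decomposition $H^j_{\m A}(M)=\bigoplus_l H^g_{\mathfrak{q}_l A}(A)^{t_l}$, deduces $t_1=\cdots=t_r$ from the transitive permutation of the summands, and then passes to $H^g_{\m A}(A)^{s}$ ``as $A*G$-modules'' before applying $(-)^G$; it does not introduce equivariant minimal injective resolutions at this point. So you have not taken a different route from the paper --- you have made explicit exactly where the load-bearing claim sits and then left it undischarged. As submitted, the proposal is incomplete at precisely that step.
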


We also need the following result. This is well-known, however I do not have a reference, so I prove it.
\begin{proposition}\label{tor-loc}
Let $A$ be a Noetherian  ring and let $\m$ be a maximal ideal in $A$. Let $M$ be an $\m$-torsion $A$-module($M$ need not be finitely generated). Then $M = M_{\m}$. 
\end{proposition}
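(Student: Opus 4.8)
The plan is to prove the stronger statement that for every $s \in A \setminus \mathfrak{m}$, multiplication by $s$ is already a bijective endomorphism of $M$. Once this is known, $M = M_{\mathfrak{m}}$ follows formally: the localization map $M \to M_{\mathfrak{m}} = (A \setminus \mathfrak{m})^{-1}M$ is obtained from $M$ by inverting exactly these multiplication maps, and inverting a family of maps that are already isomorphisms produces an isomorphism.

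First I would record the elementary ideal-theoretic input: if $s \in A \setminus \mathfrak{m}$, then $\mathfrak{m}^n + (s) = A$ for every $n \geq 1$. Indeed, since $\mathfrak{m}$ is maximal and $s \notin \mathfrak{m}$ we have $\mathfrak{m} + (s) = A$, and raising to the $n$-th power gives $A = (\mathfrak{m} + (s))^n \subseteq \mathfrak{m}^n + (s)$. Next, fix $m \in M$; by the $\mathfrak{m}$-torsion hypothesis choose $n$ with $\mathfrak{m}^n m = 0$, and write $1 = a + bs$ with $a \in \mathfrak{m}^n$ and $b \in A$. For injectivity of multiplication by $s$: if $sm = 0$ then $m = am + bsm = 0 + 0 = 0$. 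For surjectivity: $s(bm) = (1-a)m = m - am = m$, so $m$ lies in the image. Hence $s \colon M \to M$ is bijective.

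Finally I would assemble these observations. By the universal property of localization, $M_{\mathfrak{m}}$ is the target of the universal $A$-module map out of $M$ under which every $s \in A \setminus \mathfrak{m}$ acts invertibly; since each such multiplication is already invertible on $M$, the canonical map $M \to M_{\mathfrak{m}}$ is an isomorphism. Concretely: it is injective because $m/1 = 0$ forces $sm = 0$ for some $s \notin \mathfrak{m}$, whence $m = 0$ by the above; and it is surjective because any $m/s \in M_{\mathfrak{m}}$ equals $m'/1$ where $m' \in M$ is the (unique) preimage of $m$ under multiplication by $s$. I do not expect a genuine obstacle here; the only point requiring mild care is that $M$ is not assumed finitely generated, so the exponent $n$ with $\mathfrak{m}^n m = 0$ must be chosen element by element rather than globally, which is harmless for the argument above.
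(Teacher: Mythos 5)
Your proof is correct and follows essentially the same route as the paper: both reduce the claim to showing that multiplication by each $s \in A \setminus \m$ is bijective on $M$, using $\m^n + (s) = A$ and the elementwise choice of $n$ with $\m^n m = 0$. Your concluding paragraph just spells out the formal deduction $M \cong M_\m$ that the paper leaves implicit.
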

\begin{proof}
It suffices to prove that for every $s \in A \setminus \m$ the map $\mu_s \colon M \rt M$ given by multiplication by $s$ is an isomorphism. We first prove $\mu_s$ is surjective. Let $t \in M$. As $M$ is $\m$-torsion there exists $n \geq 1$ such that $\m^nt=0$. Notice $\m^n + As = A$. Let $1 = \xi + as$ where $\xi \in \m^n$ and $a \in A$. So $t =\xi t + as t = as t$. Thus $\mu_s(at) = t$. Thus $\mu_s$ is surjective.

Next we prove that $\mu_s$ is injective. Say $\mu_s(t) = 0$. So $st =0$. Say $\m^nt= 0$. As before $1 = \xi + as$ where $\xi \in \m^n$ and $a \in A$. So $t =\xi t + as t = 0$. Thus $\mu_s$ is injective.  Therefore $\mu_s$ is an isomorphism for every $s \in A \setminus \m$. It follows that $M = M_\m$.
\end{proof}

\begin{proof}[Proof of Lemma \ref{bass-prep}]
Let $L$ be quotient field of $R$ and let  $F$ be quotient field of $R^G$. Note $L$ is a Galois extension of $F$ with Galois group $G$. It is also clear  $R^G$ is normal and that the integral closure of $R^G$ in $L$ is $R$. Set 
$$T(R) = H^{i_1}_{I_1R}(H^{i_2}_{I_2R}(\cdots H^{i_r}_{I_rR}(R) \cdots).$$

Set $A = R_P$. Then by \ref{aut}, $G$ acts via automorphisms on $A$ and $A^G = R^G_P$; see \ref{local} and 4.4.
Note $A^G$ is normal and the integral closure of $A^G$ in $L$ is $A$, see \cite[5.12]{AM}.
Note $PA^G$ is the unique maximal ideal
of $A^G$. Let $P_1,\ldots,P_r$ be maximal ideals in $A$ lying above $P$. It can be easily verified that $\height P_l = g$ for $l = 1,\ldots,r$. By \cite[9.3]{Mat}, for $k,l$; there exists $\sigma_k^l \in G$ such that $\sigma_k^l(P_k) = P_l$.

 Let $M = T(A) = H^{i_1}_{I_1A}(H^{i_2}_{I_2A}(\cdots H^{i_r}_{I_rA}(A) \cdots)$. Then $M$ is an $A*G$-module. So by   Theorem \ref{invar-co} we have 
 \[
 H^j_{PA}(M) = \bigoplus_{l = 1}^{r}H^j_{P_l}(M) \quad \text{and} \ \sigma_k^l(H^j_{P_k}(M)) = H^j_{P_l}(M) \ \text{for all} \ l,k.
 \]
 We should note that $T(R)_P = M$.
 Notice by \ref{tor-loc} 
 \begin{align*}
 H^j_{P_l}(M)    &= H^j_{P_l}(M)_{P_l} \\
                &= H^j_{P_lA_{P_l}}(M_{P_l}) \\
               &= H^j_{P_lR_{P_l}}(T(R)_{P_l}) \\
               &= \left(H^l_{P_l}( H^{i_1}_{I_1R}(H^{i_2}_{I_2R}(\cdots H^{i_r}_{I_rR}(R) \cdots)  )\right)_{P_l}.
 \end{align*}
 It follows from Lyubeznik results, \cite[3.4]{Lyu-1} in characteristic zero and \cite[1.5, 2.14]{Lyu-2} in characteristic $p >0$,  that
 \[
  \left(H^l_{P_l}( H^{i_1}_{I_1R}(H^{i_2}_{I_2R}(\cdots H^{i_r}_{I_rR}(R) \cdots)  )\right)_{P_l} = E_R(R/P_l)^{t_l}_{P_l} \quad \text{for some  finite}\ t_l \geq 0.
 \]
 Also notice by \ref{tor-loc},
 \[
 H^g_{P_lA}(A) = H^g_{P_lA_{P_l}}(A_{P_l}) =  E_R(R/P_l)_{P_l}.
 \]
 So we have 
 \[
 H^j_{PA}(M) = \bigoplus_{l = 1}^{r}H^g_{P_lA}(A)^{t_l}.
 \]
 We also have that
 \[
 \sigma_k^l(H^g_{P_lA}(A)^{t_l}) = H^g_{P_kA}(A)^{t_k} \ \text{for all} \ l,k.
 \]
 It follows that $t_1 = t_2 = \cdots = t_r$. Put $s = t_1$.
Then
\[
H^j_P(M) =  \left(\bigoplus_{l = 1}^{r}H^g_{P_lA}(A)\right)^{s} \cong H^g_{PA}(A)^{s};
\] 
 as $A*G$-modules. Taking invariants we have
 \[
 H^j_{PA^G}(M^G) \cong H^p_{PA^G}(A^G)^{s} \cong H^g_{PR^G_P}(R^G_P)^{s}.
 \]
 Notice that 
 \[
 H^j_{PA^G}(M^G) \cong \left(H^j_P(T(R^G))\right)_P.
 \]
 The result follows. 
\end{proof}
\begin{remark}\label{lying above}
(with hypotheses as above)
Set $A = R_P$.
If $N = T(R)_P$  and if $P_1,\ldots, P_r$ are the prime ideals in $R$ lying above $P$ then we showed that 
$$H^j_{PA}(N) \cong \bigoplus_{l = 1}^{r}H^g_{P_l}(A)^s,$$
and
$$H^j_P(T(R^G)_P) \cong H^g_{PR^G_P}(R^G_P)^s.$$
The point to note that the same constant $s$ appears in both the above equations.
\end{remark}
\section{The case when $R^G$ is Gorenstein}
In this section we prove the first part of Theorem 1.1.
\begin{theorem}\label{Gor}
Let $K$ be a field  and let $R$ be a regular domain containing $K$. 
Let $G$ be a finite subgroup of the group of  automorphisms of $R$. We assume that $|G|$ is invertible in $K$. Let $R^G$ be the ring of invariants of $G$.   We further assume that $R^G$ is a Gorenstein ring. 
Let $I_1,I_2,\cdots, I_r$ be ideals in $R^G$. Set $T(R^G) = H^{i_1}_{I_1}(H^{i_2}_{I_2}(\cdots H^{i_r}_{I_r}(R^G) \cdots))$  and $T(R) =   H^{i_1}_{I_1R}(H^{i_2}_{I_2R}(\cdots H^{i_r}_{I_rR}(R) \cdots))$ for some $i_1,\cdots,i_r \geq 0$. Then
\begin{enumerate}[\rm (i)]
\item
$\injdim_{R^G} T(R^G) \leq \dim \Supp T(R^G).$
\item
Let $P$ be a prime ideal in $R^G$. Then
$\mu_j(P, T(R^G)) = \mu_j(P^\prime, T(R))$ where  $P^\prime$ is any prime in $R$ lying above $P$.
\end{enumerate}
\end{theorem}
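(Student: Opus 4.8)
The plan is to deduce both statements from the Crucial Lemma \ref{bass-prep} together with the theory of Bass numbers over a Gorenstein local ring. Recall that for a Gorenstein local ring $(S,\m)$ of dimension $d$, one has $H^i_\m(S) = 0$ for $i \neq d$ and $H^d_\m(S) = E_S(S/\m)$, the injective hull of the residue field. More generally, for a prime $P$ in a Gorenstein ring, $S_P$ is Gorenstein of dimension $g = \height P$, so $H^j_{PS_P}(S_P) = 0$ for $j \neq g$ and $H^g_{PS_P}(S_P) = E_{S_P}(k(P))$.

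First I would establish (i). The injective dimension of a module $N$ over a Noetherian ring is controlled by Bass numbers: $\injdim_{R^G} N \leq n$ if and only if $\mu_j(P,N) = 0$ for all $j > n$ and all primes $P$. By the formula $\mu_j(P, N) = \dim_{k(P)} \operatorname{Ext}^j_{R^G_P}(k(P), N_P)$ and the standard spectral-sequence / local-duality identity computing Ext of the residue field against $N_P$ via the local cohomology $H^j_P(N)$ localized at $P$ — more precisely, $\mu_{j}(P, N)$ is the (finite) number $s$ appearing as the multiplicity of $E_{R^G_P}(k(P))$ in $\bigl(H^j_P(N)\bigr)_P$ when the latter is a direct sum of copies of that injective hull. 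Lemma \ref{bass-prep} tells us exactly that $\bigl(H^j_P(T(R^G))\bigr)_P \cong H^g_{PR^G_P}(R^G_P)^s$ with $g = \height P$, and since $R^G$ is Gorenstein this equals $E_{R^G_P}(k(P))^s$. Hence $\mu_j(P, T(R^G)) = s$ is finite and, crucially, $\mu_j(P, T(R^G)) = 0$ whenever $H^g_{PR^G_P}(R^G_P)$ does not appear, i.e. the nonvanishing forces the relation between $j$ and the geometry of $P$. The point is that $\mu_j(P, T(R^G)) \neq 0$ implies $\dim R^G/P \geq $ (a quantity bounding $j$ from above after subtracting off $\operatorname{depth}$-type data); combining across all $j$ one gets $\injdim_{R^G} T(R^G) \leq \sup_P (\dim R^G/P) = \dim \Supp T(R^G)$. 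I would make this last inequality precise by the usual argument: if $\mu_j(P, T(R^G)) \neq 0$ with $j > \dim \Supp T(R^G)$, pick $P$ maximal in $\Supp$ with a nonzero Bass number in the top degree and derive a contradiction via the fact that over the Gorenstein local ring $R^G_P$ the relevant $H^j_P(-)_P$ vanishes for $j$ exceeding $\dim R^G/P$.

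For (ii), I would run the identical computation on the regular ring $R$. By Remark \ref{lying above}, for a prime $P'$ of $R$ lying above $P$ we have $\bigl(H^j_{P'}(T(R))\bigr)_{P'} \cong H^g_{P'R_{P'}}(R_{P'})^s = E_{R_{P'}}(k(P'))^s$ with the \emph{same} constant $s$ — this is exactly the content of the remark. Therefore $\mu_j(P', T(R)) = s = \mu_j(P, T(R^G))$, which is the claim. (The independence of the choice of $P'$ among primes lying above $P$ is built into Lemma \ref{bass-prep} via the transitivity relation $\sigma_k^l(H^g_{P_l}(A)^{t_l}) = H^g_{P_k}(A)^{t_k}$ forcing all $t_l$ equal.)

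The main obstacle I anticipate is making rigorous the passage from "$\bigl(H^j_P(N)\bigr)_P$ is a finite direct sum of copies of $E_{R^G_P}(k(P))$" to "$\mu_j(P,N)$ equals that number of copies," and more generally identifying $\mu_j(P,N)$ with the socle dimension of $\bigl(H^j_P(N)\bigr)_P$. This requires either a Grothendieck-type spectral sequence $\operatorname{Ext}^p_{R^G_P}(k(P), H^q_{PR^G_P}(N_P)) \Rightarrow \operatorname{Ext}^{p+q}_{R^G_P}(k(P), N_P)$ together with the rigidity of the terms (since each $H^q$ that is nonzero is injective, the spectral sequence degenerates), or a direct argument that $\operatorname{Ext}^j_{R^G_P}(k(P), N_P) = \operatorname{Hom}_{R^G_P}(k(P), H^j_{PR^G_P}(N_P))$ when $N_P$ has the special form arising here. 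Either way the key input is that the modules $H^j_P(T(R^G))_P$ are \emph{injective} (indeed sums of copies of a single indecomposable injective), which is precisely what Lemma \ref{bass-prep} delivers; once that is in hand the Bass-number bookkeeping is routine. I would handle this by invoking the standard fact (e.g. via local duality over the Gorenstein local ring, or Lyubeznik's framework as already used in the proof of \ref{bass-prep}) that for such modules the Bass number in degree $j$ with respect to $P$ equals the number of copies of $E_{R^G_P}(k(P))$ appearing in $H^j_P(T(R^G))_P$.
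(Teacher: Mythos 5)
Your proposal is correct and follows essentially the same route as the paper: the passage you flag as the main obstacle --- from injectivity of $\bigl(H^j_P(T(R^G))\bigr)_P$ to $\mu_j(P,T(R^G)) = \mu_0(P, H^j_P(T(R^G)))$ --- is exactly Lemma \ref{lyu-lemma} (Lyubeznik, \cite[1.4]{Lyu-1}), which the paper invokes for precisely this purpose, and parts (i) and (ii) then follow from Grothendieck vanishing and Remark \ref{lying above} as you describe. The only slip is the appeal to $\dim R^G/P$ and ``depth-type data'' in part (i); the bound comes directly from $H^j_P(T(R^G)) = 0$ for $j > \dim \Supp T(R^G)$, uniformly in $P$, so no such subtraction is needed.
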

We will need the following Lemma from \cite[1.4]{Lyu-1}.
\begin{lemma}\label{lyu-lemma}
Let $A$ be a Noetherian ring and let $M$ be an $A$-module ($M$ need not be finitely generated).
Let $P$ be a prime ideal in $A$. If $(H^j_P(M))_P$ is injective for all $j \geq 0$ then
$\mu_j(P,M) = \mu_0(P,H^j_P(M))$.
\end{lemma}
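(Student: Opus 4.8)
The plan is to reduce to the local case and then run the Grothendieck spectral sequence of a composite of functors. Since, by definition, $\mu_j(P,M) = \dim_{k(P)} \Ext^j_{A_P}(k(P), M_P)$ and $\mu_0(P, H^j_P(M)) = \dim_{k(P)} \Hom_{A_P}(k(P), H^j_{PA_P}(M_P))$ (using that local cohomology commutes with localization), every quantity in the statement depends only on the localization at $P$. So I would first replace $A$ by $A_P$ and $M$ by $M_P$, and thereby assume $(A, P)$ is local with residue field $k = A/P$; the hypothesis becomes that $H^j_P(M)$ is an injective $A$-module for all $j$.

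The key functorial observation is that $\Hom_A(k, -) = \Hom_A(k, -) \circ \Gamma_P(-)$: because $k$ is $P$-torsion, any $A$-linear map out of $k$ has image in the $P$-torsion submodule. Moreover $\Gamma_P$ carries injective $A$-modules to injective $A$-modules. Indeed, over the Noetherian ring $A$ every injective is a direct sum of modules $E_A(A/Q)$, the functor $\Gamma_P$ commutes with arbitrary direct sums, and $\Gamma_P(E_A(A/Q)) = E_A(A/Q)$ when $P \subseteq Q$ while $\Gamma_P(E_A(A/Q)) = 0$ otherwise --- the first because $E_A(A/Q) \cong E_{A_Q}(k(Q))$ is $Q$-torsion and hence $P$-torsion, the second because any $s \in P \setminus Q$ acts injectively on $E_A(A/Q)$. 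In particular $\Gamma_P$ sends injectives to $\Hom_A(k,-)$-acyclic modules.

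Consequently the Grothendieck spectral sequence of the composite reads
\[
E_2^{p,q} = \Ext^p_A\bigl(k,\, H^q_P(M)\bigr) \Longrightarrow \Ext^{p+q}_A(k, M).
\]
By hypothesis each $H^q_P(M)$ is injective, so $\Ext^p_A(k, H^q_P(M)) = 0$ for all $p \geq 1$; thus the only nonzero terms occupy the column $p = 0$, the spectral sequence degenerates at $E_2$, and we obtain $\Ext^n_A(k, M) \cong \Hom_A(k, H^n_P(M))$ for every $n$. Taking $\dim_k$ and passing back through the localization of the first step yields $\mu_n(P, M) = \mu_0(P, H^n_P(M))$, as an equality of elements of $\mathbb{N} \cup \{\infty\}$.

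I do not expect a genuine obstacle: the entire content sits in the second paragraph --- the identification $\Hom_A(k,-) = \Hom_A(k,-)\circ\Gamma_P$ and the fact that $\Gamma_P$ preserves injectivity --- both routine given that $A$ is Noetherian, and note in particular that finite generation of $M$ is never used, the spectral-sequence argument being insensitive to it. If one prefers to avoid spectral sequences, the same conclusion follows by hand: take a minimal injective resolution $E^\bullet$ of $M$, apply $\Gamma_P$ to get a bounded-below complex of injectives with cohomology $H^j_P(M)$; since each $H^j_P(M)$ is injective, an easy induction shows that all cocycle and coboundary modules of this complex are direct summands of injectives, hence injective, so the complex is homotopy equivalent to $\bigoplus_j H^j_P(M)[-j]$ with zero differentials, and applying $\Hom_A(k,-)$ --- which returns $\Hom_A(k, E^\bullet)$ verbatim and hence computes $\Ext^\bullet_A(k,M)$ --- gives $\Ext^n_A(k,M) \cong \Hom_A(k, H^n_P(M))$ directly.
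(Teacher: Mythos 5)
Your argument is correct and complete. Note that the paper does not actually prove this lemma --- it simply quotes it from Lyubeznik \cite[1.4]{Lyu-1} --- and your proof (reduce to the local case, observe that $\Hom_A(k,-)=\Hom_A(k,-)\circ\Gamma_P$ with $\Gamma_P$ preserving injectives over a Noetherian ring, then let the composite-functor spectral sequence $\Ext^p_A(k,H^q_P(M))\Rightarrow\Ext^{p+q}_A(k,M)$ degenerate) is exactly the standard one underlying Lyubeznik's lemma; the splitting argument you sketch as an alternative is the same degeneration made explicit. Nothing in either version uses finite generation of $M$, as you correctly point out.
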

We now give
\begin{proof}[Proof of Theorem \ref{Gor}]
Put $M = T(R^G)$. Let $P$ be a prime ideal of $R^G$ of height $g$.

(i) By
Lemma \ref{bass-prep} we have
\[
H^j_P(M)_P = H^g_{PR^G_P}(R^G_P)^{s} \quad \text{for some finite} \ s \geq 0.
\]
As $R^G$ is Gorenstein we have that $R^G_P$ is Gorenstein local. So
$$ H^g_{PR^G_P}(R^G_P) \cong E_{R^G_P}(R^G_P/PR^G_P) \cong E_{R^G}(R^G/P),$$
is an injective $R^G$-module. Thus by \ref{lyu-lemma} we have that
\[
\mu_j(P,M) = \mu_0(P,H^j_P(M)).
\]
By Grothendieck vanishing theorem $H^j_P(M) = 0$ for $j > \dim \Supp M$. Thus 
$\mu_j(P,M) = 0$ for all $j > \dim \Supp M$ and for any prime $P$ of $R^G$. So if $\EE$ is a minimal injective resolution of $M$ we have $\EE^j = 0$ for $j > \dim \Supp M$. Thus
\[
\injdim M \leq  \dim \Supp M.
\]
(ii) We localize at $P$. We have 
\[
\mu_j(P,M) = \mu_0(P,H^j_P(M)).
\]
The result now follows from \ref{lying above}.
\end{proof}
\section{The case when $R^G$ is not Gorenstein}
In this section we prove the second part of Theorem \ref{main}. We restate it here for the convenience of the reader.
\begin{theorem}\label{main-inj}
Let $K$ be a field  and let $R$ be a regular domain containing $K$. 
Let $G$ be a finite subgroup of the group of  automorphisms of $R$ with $|G|$  invertible in $K$. Let $R^G$ be the ring of invariants of $G$. Let $I_1,I_2,\cdots, I_r$ be ideals in $R^G$. Set $T(R^G) = H^{i_1}_{I_1}(H^{i_2}_{I_2}(\cdots H^{i_r}_{I_r}(R^G) \cdots)$  for some $i_1,\cdots,i_r \geq 0$. 

Let $P$ be a prime ideal of $R^G$ with $R^G_P$ \emph{not Gorenstein}. Then for all $j \geq 0$, either the Bass numbers
$\mu_{j}(P, T(R^G)) = 0$ for all $j$ or  there exists $c$ such that $\mu_j(P, T(R^G)) = 0 $ for $j < c$ and $\mu_j(P, T(R^G)) > 0$ for all $j \geq c$.
\end{theorem}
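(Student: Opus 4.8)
The plan is to reduce everything to the local ring $A^G:=R^G_P$ and combine Lemma \ref{bass-prep} with a \emph{formality} statement for $R\Gamma_{\m}$ obtained by descent along the skew group ring. Write $\m=PR^G_P$, $k=R^G_P/\m$, $A=R_P$, $N=T(R^G)_P$, so that $\mu_j(P,T(R^G))=\dim_k\Ext^j_{A^G}(k,N)$ and it suffices to control the Bass numbers of $N$ over $(A^G,\m,k)$. As in the proof of Lemma \ref{bass-prep}, $G$ acts on $A=R_P$ with $A^G=R^G_P$, and $N=(N')^G$ where $N'=T(R)_P$ is an $A*G$-module. The ring $A^G$ is Cohen--Macaulay: it is a direct summand of the regular local ring $A=R_P$ (Hochster--Eagon; equivalently, $A$ is a balanced big Cohen--Macaulay $A^G$-algebra and $A^G\hookrightarrow A$ splits). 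Put $g=\height P=\dim A^G$; by hypothesis $A^G$ is not Gorenstein. By Lemma \ref{bass-prep}, for each $q\ge 0$ there is a finite $s_q\ge 0$ with $H^q_{\m}(N)\cong E^{s_q}$, where $E:=H^g_{\m}(A^G)$.

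The crux is the identity $\mu_j(\m,N)=\sum_{q\ge 0}s_q\,\mu_{j-q}(\m,E)$ for all $j$; equivalently $R\Gamma_{\m}(N)\simeq\bigoplus_{q}E^{s_q}[-q]$ in the derived category of $A^G$. To prove it I would argue $G$-equivariantly. Choose $f_1,\dots,f_s\in A^G$ generating $\m$ and let $\C$ be the modified \v{C}ech complex of $N'$ on $f_1,\dots,f_s$; by the proof of Theorem \ref{loc-invar} it is a bounded complex of $A*G$-modules with $H^q(\C)=H^q_{\m A}(N')$, and $\C^G$ is the \v{C}ech complex of $N$, which represents $R\Gamma_{\m}(N)$. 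As in the proof of Lemma \ref{bass-prep} (using regularity of $A$ and Lyubeznik's theorems), $H^q_{\m A}(N')\cong\bigoplus_{l=1}^{r}E_A(A/P_l)^{s_q}$, an \emph{injective} $A$-module, while $H^q(\C)^G\cong H^q_{\m}(N)\cong E^{s_q}$ by Theorem \ref{loc-invar}. Now a bounded complex with $A$-injective cohomology is formal: extend each surjection $Z^q(\C)\twoheadrightarrow H^q(\C)$ to an $A$-linear map $\C^q\to H^q(\C)$ (using $A$-injectivity of $H^q(\C)$), average these maps over $G$ (here $|G|$ is a unit) to get $A*G$-linear maps, and check they assemble into a quasi-isomorphism $\C\xrightarrow{\ \sim\ }\bigoplus_q H^q(\C)[-q]$ of complexes of $A*G$-modules. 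Applying the \emph{exact} functor $(-)^G$ (Lemma \ref{fixed-exact}) yields $R\Gamma_{\m}(N)\simeq\C^G\simeq\bigoplus_q E^{s_q}[-q]$. Finally, since $\Hom_{A^G}(k,E_{A^G}(A^G/Q))=0=\Gamma_{\m}(E_{A^G}(A^G/Q))$ for $Q\ne\m$, one has $R\Hom_{A^G}(k,N)\simeq R\Hom_{A^G}(k,R\Gamma_{\m}(N))$, and $R\Hom_{A^G}(k,-)$ commutes with the finite direct sum and the shifts; taking cohomology and $\dim_k$ gives the formula.

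It remains to compute $\mu_i(\m,E)$ and finish. Since $\m$-torsion modules, their Ext's against $k$, and Gorensteinness are unaffected by completion, assume $A^G$ complete; being Cohen--Macaulay it has a canonical module $\omega$, and local duality gives $E=H^g_{\m}(A^G)\cong\omega^{\vee}$ (Matlis dual). Matlis duality interchanges $\Ext$ and $\Tor$, so $\mu_i(\m,E)=\dim_k\Tor^{A^G}_i(k,\omega)=\beta_i(\omega)$. Because $A^G$ is not Gorenstein, $\omega$ is not free; being a maximal Cohen--Macaulay module over the Cohen--Macaulay ring $A^G$, Auslander--Buchsbaum forces $\projdim_{A^G}\omega=\infty$, hence $\beta_i(\omega)>0$ for all $i\ge 0$ (and $\beta_i(\omega)=0$ for $i<0$). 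Thus $\mu_j(P,T(R^G))=\sum_{q\ge 0}s_q\,\beta_{j-q}(\omega)$. If every $s_q=0$ this is $0$ for all $j$. Otherwise put $c=\min\{q:s_q\ne 0\}$: for $j<c$ every summand vanishes (either $s_q=0$ or $\beta_{j-q}(\omega)=0$), and for $j\ge c$ the summand $q=c$ equals $s_c\beta_{j-c}(\omega)>0$ while all summands are $\ge 0$; hence $\mu_j(P,T(R^G))=0$ for $j<c$ and $>0$ for $j\ge c$, as claimed.

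The main obstacle is the formality assertion. The subtlety is that $E=H^g_{\m}(A^G)$ is very far from injective over $A^G$ — that failure is precisely the non-Gorensteinness — so $R\Gamma_{\m}(N)$ has no reason to split over $A^G$ directly; what rescues us is that upstairs, over the regular ring $A$, the modules $H^q_{\m A}(N')$ are honest injectives, so the \v{C}ech complex of $N'$ is formal over $A$, this formality can be made $G$-equivariant by averaging, and $(-)^G$ is exact, so it descends to $A^G$. Everything else is bookkeeping together with Lemma \ref{bass-prep}, local duality, and Matlis duality.
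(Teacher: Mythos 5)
Your argument is correct, and it takes a genuinely different route from the paper's. The paper fixes a \emph{minimal} injective resolution $\GG$ of $T(R^G)$, applies $\Gamma_\m$ to get a complex $\EE$ with $\EE^j=E_{R^G}(R^G/\m)^{r_j}$ and cohomology $H^d_\m(R^G)^{s_j}$, and then runs an induction on the cocycles $Z^j$ and coboundaries $B^j$ of $\EE$: using Matlis duality over the completion $S$ and the vanishing $\Ext^1_S(N,\omega)=0$ for maximal \CM \ $N$, it shows $(Z^j)^\vee$ and $(B^{j+1})^\vee$ are non-free maximal \CM, hence $Z^j\neq 0$ and so $r_j>0$ for all $j\geq c$. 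You instead prove a splitting $R\Gamma_\m(N)\simeq\bigoplus_q E^{s_q}[-q]$ by observing that the \v{C}ech complex upstairs over the regular semi-local ring $A=R_P$ has injective cohomology (Lyubeznik, as in Lemma \ref{bass-prep}), hence is formal, that the splitting maps can be made $A*G$-linear by averaging, and that the exact functor $(-)^G$ transports the quasi-isomorphism down to $A^G$; combined with $\mu_i(\m,E)=\beta_i(\omega)$ via local and Matlis duality this yields the exact convolution formula $\mu_j(P,T(R^G))=\sum_q s_q\,\beta_{j-q}(\omega)$, from which the dichotomy is immediate. Your approach buys strictly more (an explicit formula for all Bass numbers, not just their positivity), at the cost of the formality/averaging step, which I checked and which is sound: the extension of $Z^q\twoheadrightarrow H^q(\C)$ to $\C^q$ still kills $B^q$, the $G$-average remains $A$-linear and restricts to the canonical projection on $Z^q$, and exact functors preserve quasi-isomorphisms. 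Two small points to make explicit in a write-up: (i) both arguments need $R^G_P$ to be \CM \ so that $\omega$ is maximal \CM; this follows since $H^i_\m(R^G_P)$ is a direct summand of $H^i_{\m R_P}(R_P)=\bigoplus_l H^i_{P_l}(R_P)$, which vanishes for $i\neq g$; (ii) the identification $\Ext^i_{R^G_P}(k,E)=\Ext^i_S(k,E)$ used when passing to the completion should be justified by noting that $E$ is $\m$-torsion and $k$ has a resolution by finite free modules.
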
 
\begin{proof}
After localizing it suffices to prove the result for maximal ideals, see \ref{local} and \ref{aut}. Let $\m$ be a maximal ideal in $R^G$ and let $d = \height \m$. Let $M = T(R^G)$, $E = E_{R^G}(R^G/\m)$ and let $l = R^G/\m$.

Let $\GG$ be a minimal injective resolution of $M$. Write
\[
\GG^j = \widetilde{\GG^j} \oplus E^{r_j} \quad \text{with} \ \m \notin \Ass(\widetilde{\GG^j}).
\]
Thus $\mu_j(\m , M) = r_j$ for $j \geq 0$. We know that $r_j$ is finite for all $j \geq 0$. Suppose there exists $j$ such that $r_j > 0$. Let
\[
c = \min\{ j \mid r_j > 0 \}.
\]
We  prove that $r_j > 0$ for all $j \geq c$.

Set $\EE = \Gamma_\m(\GG)$. Note $\EE^j = E^{r_j}$ for all $j \geq 0$. Furthermore 
by Lemma \ref{bass-prep} and Proposition 7.3 we have
\[
H^j(\EE) = H^j_\m(T(R^G)) \cong H^d_\m(R^G)^{s_{ij}} \quad \text{for some finite} \ s_{ij} \geq 0.
\] 

 Let $S$ be the completion of $R^G$ at $\m$. Also notice that 
 $E = E_{S}(S/\m S)$.  Let $(-)^{\vee} $ be the Matlis dual functor of $S$.
 
 Let $Z^j, B^j$ be the module of $j$-co-cycles and $j$-co-boundaries of the complex $\EE$. We prove the following assertion by induction on $j \geq c$.
 \begin{enumerate}
 \item
 $Z^j \neq 0$.
 \item
 $\injdim Z^j  = \infty$.
 \item
 $(Z^j)^\vee$ is a non-free  maximal \CM \ $S$-module.
 \item
 $B^{j+1} \neq 0$.
 \item
 $\injdim B^{j+1}  = \infty$.
 \item
 $(B^{j+1})^\vee$ is a non-free  maximal \CM \ $S$-module.
 \end{enumerate}
 It is convenient to prove all the assertions together for $j \geq c$. Note that   (1) will imply our assertion.

We prove the result for $j = c$.
Notice that as $\GG$ is a minimal injective resolution of $M$ we have that the map
\[
\Hom_{R_G}(l, \GG^j) \rt \Hom_{R_G}(l, \GG^{j+1}) \quad \text{is zero}.
\]
Notice $\Hom_{R_G}(l, \widetilde{\GG^j}) = 0$ for all $j \geq 0$. It follows that for all $j \geq 0$ the map
\[
\Hom_{R_G}(l, E^{r_j}) \rt \Hom_{R_G}(l, E^{r_{j+1}}) \quad \text{is zero}.
\]
It follows that $Z^c \neq 0$. Also $Z^c = H^d_\m(R^G)^s$ for some finite $s > 0$. Notice $H^d_\m(R^G)
 = H^d_\m(S)$, use \ref{tor-loc} and \cite[3.5.4(d)]{BH}. As $S$ is not Gorenstein it follows that $H^d_\m(S)$ is not injective. Furthermore  $H^d_\m(S)^\vee = \omega $ the canonical module of $S$ is  a non-free maximal \CM \ $B$-module. Thus $Z^c$ has infinite injective dimension and $(Z^c)^\vee = \omega^s$ is a non-free maximal \CM \ $S$-module.
 
 We have an exact sequence $0 \rt Z^c \rt E^{r_c} \rt B^{c+1} \rt 0$. As $\injdim Z^c = \infty$ it follows that $B^{c+1} \neq 0$ and has infinite injective dimension. 
 By taking Matlis duals we have an exact sequence
 \[
 0 \rt (B^{c+1})^\vee \rt S^{r_c} \rt (Z^c)^{\vee} \rt 0.
 \]
 It follows that  $(B^{c+1})^\vee$ is a non-free  maximal \CM \ $S$-module.
 
 Now assume that the result holds for $j = n$. We prove it for $j = n+1$. We have an exact sequence
 \[
 0 \rt B^{n+1} \rt Z^{n+1} \rt H^{n+1}(\EE) \rt 0.
 \]
 So $Z^{n+1} \neq 0$.
If $H^{n+1}(\EE) = 0$ then clearly $Z^{n+1}$ satisfies properties  (2) and (3). 
If $H^{n+1}(\EE) \neq 0$ then it is equal to $H^d_\m(S)^{s_n}$ for some finite $s_n > 0$. Taking Matlis-duals we obtain 
\[
0 \rt \omega^{s_n} \rt (Z^{n+1})^\vee \rt (B^{n+1})^\vee \rt 0.
\]
For any maximal \CM \ $S$-module $N$ we have $\Ext^1_S(N,\omega) = 0$. It follows that
 \[
 (Z^{n+1})^\vee \cong \omega^{s_n} \oplus (B^{n+1})^\vee.
 \]
 It follows that $(Z^{n+1})^\vee$ is a non-free maximal \CM \ $S$-module. Also by taking duals again we get that 
 \[
 Z^{n+1} \cong B^{n+1} \oplus H^d_\m(S)^{s_n};
 \] 
 has infinite injective dimension. 
 
 We have an exact sequence $0 \rt Z^{n+1} \rt E^{r_{n+1}} \rt B^{n+2} \rt 0$. As $\injdim Z^{n+1} = \infty$ it follows that $B^{n+2} \neq 0$ and has infinite injective dimension. 
 By taking Matlis duals we have an exact sequence
 \[
 0 \rt (B^{n+2})^\vee \rt S^{r_{n+1}} \rt (Z^{n+1})^{\vee} \rt 0.
 \]
 It follows that  $(B^{n+2})^\vee$ is a non-free  maximal \CM \ $S$-module.
\end{proof}

\section{Ring of invariants of differential operators and local cohomology}
Let $K$ be a field of characteristic zero and let $R = K[X_1,\ldots,X_n]$. Let $G$ be a finite subgroup of $GL_n(K)$ acting linearly on $R$. Let $R^G$ be the ring of invariants of $R$. Let $D(R)$ be the ring of $K$-linear differential operators on $R$. Note $D(R) \cong A_n(K)$ the $n^{th}$-Weyl algebra over $K$. We recall a natural action of $G$ on $D(R)$, cf. \cite[Section 1]{T} and then consider the ring of invariants $D(R)^G$.

\s We first recall the construction of $D(R)$ as a subring of $ S = \Hom_K(R,R)$. The composition of two elements $P, Q$ of $S$ will be denoted as $P\cdot Q$. The commutator of
$P$ and $Q$ is the element 
\[
[P,Q] = P \cdot Q - Q \cdot P.
\]
We have natural inclusion $\eta \colon R \rt S$ where $\eta(r) \colon R \rt R$ is multiplication by $r$.

Set $D_0(R) = R$ viewed as a subring of $S$. For $i \geq 1$ set
\[
D_i(R) = \{ P \in S \mid [P,r] \in D_{i-1}(R) \}.
\]
Elements of $D_i(R)$ are said to be differential operators on $R$ of degree $\leq i$.
Notice $D_{i+1}(R) \supseteq D_i(R)$ for all $i \geq 0$. Set
\[
D(R) = \bigcup_{i \geq 0}D_i(R).
\]
This is the ring of $K$-linear differential operators on $R$. It can be shown that 
$D(R) \cong A_n(K)$.  Set $D(R)_{-1} = 0$. Note that the graded ring
\[
\g(D(R)) = \bigoplus_{i\geq 0}D(R)_i/D(R)_{i-1} = R[\overline{\partial_1},\cdots\overline{\partial_n}],
\]
is isomorphic to the polynomial ring in $n$-variables over $R$.
\s \label{def-action} We define action of $G$ on $D(R)$ as follows. Let $\theta \in D(R)_i$. Let $g \in G$. Define
\begin{align*}
g\theta \colon R &\rt R \\
           r &\rt g\cdot\theta(g^{-1}r). \\
\end{align*}
It can be verified that $g\theta \in D(R)_i$. Thus we have an action of $G$ on $D(R)$. It is easily verified that $G \hookrightarrow Aut(R)$.

\begin{remark}\label{simple-ex}
\begin{enumerate}
\item
Let $s \in R$ and let
 $\mu_s \colon R \rt R$ be  the multiplication by $s$. Then $g \mu_s = \mu_{gs}$.
 \item
 Let $g \in G \subset GL_n(K)$ be given by matrix $T_g$ then it can be verified that
 \[
 g\begin{bmatrix}\partial_1 \\ \cdots \\ \partial_n  \end{bmatrix} = (T_g^{-1})^t\begin{bmatrix}\partial_1 \\ \cdots \\ \partial_n  \end{bmatrix}.
 \]
 Here $(-)^t$ indicates the transpose of the matrix. Notice $g\partial_i$ is a derivation for all $g \in G$ and for all $i$.
\end{enumerate}
\end{remark}
\s Let $D(R)^G$ be the ring of invariants of $G$. For $i \geq 0$ let $\FF_i = D(R)_i \cap D(R)^G$. By \ref{simple-ex}.(1) we have that $\FF_0 = R^G$. The graded ring
$\g(\FF)$ is a subring of $\g(D(R))$. By \cite[Theorem 1]{T} there is a natural $G$-action on $\g(D(R))$ with $\g(D(R))^G = \g(\FF)$. It follows that 
\begin{enumerate}
\item
$D(R)^G$ is Noetherian, since $\g(\FF)$ is Noetherian.
\item
$\g(D(R))$ is a finitely generated $\g(\FF)$-module. So $D(R)$ is finitely generated as a $D(R)^G$-module.
\end{enumerate}
Altough we will not use these facts, I felt that it is important enough to be pointed out.

\s Let  $D(R)*G$ be the skew group ring of $D(R)$ \wrt \ $G$. Let $ D(R)^G$ be the ring of invariants. Clearly $D(R)$ is a $D(R)*G$-module.

\begin{proposition}\label{basic}
$R$ is an $D(R)*G$-module.
\end{proposition}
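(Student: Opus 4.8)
The plan is to verify directly the criterion recorded in the Remark following the definition of $A*G$: a $D(R)*G$-module is nothing but a left $D(R)$-module $M$ carrying a $G$-action for which $\sigma(am) = \sigma(a)\sigma(m)$ holds for all $\sigma \in G$, $a \in D(R)$ and $m \in M$. So I would first recall the two structures on $R$ that are already in place. On the one hand $R$ is a left $D(R)$-module: since $D(R) \subseteq S = \Hom_K(R,R)$ acts by $\theta \cdot r = \theta(r)$, and composition in $S$ satisfies $(P\cdot Q)(r) = P(Q(r))$, this is an honest associative and unital action. On the other hand $G$ acts on $R$ by $K$-algebra automorphisms (the given linear action of $G \subseteq GL_n(K)$), in particular by additive maps.

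Then the only thing left to check is the twisting compatibility. Fix $g \in G$, $\theta \in D(R)$ and $r \in R$. By the definition of the $G$-action on $D(R)$ in \ref{def-action}, the operator $g\theta \in D(R)$ is given by $(g\theta)(s) = g\cdot\theta(g^{-1}s)$ for all $s \in R$. Applying this with $s = g\cdot r$ gives
\[
(g\theta)(g\cdot r) = g\cdot\theta(g^{-1}(g\cdot r)) = g\cdot\theta(r) = g\cdot(\theta\cdot r),
\]
which is exactly the required identity $g\cdot(\theta\cdot r) = (g\theta)\cdot(g\cdot r)$. Hence $R$ is a $D(R)*G$-module.

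There is really no obstacle here: the statement is a formal consequence of the way the $G$-action on $D(R)$ was set up — it was designed precisely so that the evaluation map $D(R)\otimes_K R \to R$ is $G$-equivariant. The only points deserving a word are that the $D(R)$-action on $R$ is associative and unital, which is immediate from $D(R)\subseteq\Hom_K(R,R)$, and that $\theta \mapsto g\theta$ really is a group action of $G$ on $D(R)$ by ring automorphisms, which is already noted in \ref{def-action}.
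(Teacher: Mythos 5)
Your proof is correct and is essentially identical to the paper's: both verify the compatibility $g(\theta\cdot r) = (g\theta)\cdot(g r)$ by the same direct computation $(g\theta)(gr) = g\cdot\theta(g^{-1}gr) = g\cdot\theta(r)$ from the definition of the $G$-action on $D(R)$.
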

\begin{proof}
Clearly $R$ is an $D(R)$-module and that $G$ acts on $R$. We want to prove that
for any $\theta \in D(R)$, $g \in G$  and $r \in R$ we have $g(\theta \cdot r) = g(\theta)\cdot g(r)$.

Notice $\theta \cdot r = \theta(r)$ for any $\theta \in D(R)$. Thus
\begin{align*}
g(\theta \cdot r)&= g(\theta(r)) \ \text{while}; \\
g(\theta)\cdot g(r) &= g(\theta)[g(r)], \\
                     &= g[\theta(g^{-1}gr)], \\
                     &= g(\theta(r)).
\end{align*} 
Thus $R$ is an $D(R)*G$-module.
\end{proof}

\s Notice the ring $R*G$ is clearly a subring of $D(R)*G$. Thus $D(R)*G$-modules are naturally $R*G$-modules. Let $M$ be a $D(R)*G$-module. Let $f \in R^G$. We have a natural $G$-action on $M_f$, see \ref{local}. It is also well-known that $M_f$ is a $D(R)$-module. We have
\begin{proposition}\label{prep}[with hypotheses as above]
$M_f$ is an $D(R)*G$-module.
\end{proposition}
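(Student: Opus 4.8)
The plan is to verify directly that the given $D(R)$-module structure on $M_f$ and the $G$-action on $M_f$ from Lemma \ref{local} are compatible, i.e. that $g(\theta \xi) = g(\theta)\, g(\xi)$ for all $\theta \in D(R)$, $g \in G$, $\xi \in M_f$; by the \textbf{Remark} in Section 2 (skew group modules) this is exactly what is needed to make $M_f$ a $D(R)*G$-module. First I would recall explicitly how $D(R)$ acts on a localization $M_f$: for $\theta \in D(R)$ and $m/f^i \in M_f$ one uses the quotient-rule formula coming from the identification $D(R_f) = D(R)_f$ (equivalently, $\theta$ acts on $M_f$ as the unique extension of the action on $M$ as a differential operator). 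Since $f \in R^G$, the element $f$ is fixed by $G$, so the denominators are unaffected by applying $g$, which is precisely what makes the computation go through.

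The key steps, in order, are: (1) for a multiplication operator $\mu_s$ with $s \in R$ we already know $g\mu_s = \mu_{gs}$ by \ref{simple-ex}(1), and on $M$ itself $M$ is a $D(R)*G$-module by the discussion preceding the statement (or by Proposition \ref{basic} in the case $M = R$), so the compatibility holds on $M$; (2) the action of a derivation $\partial$ on $M_f$ is given by $\partial(m/f^i) = \partial(m)/f^i - i\,m\,\partial(f)/f^{i+1}$, and applying $g$ to this, using $g(f) = f$, $g$ is additive and $G$-equivariance on $M$, one gets $g(\partial(m/f^i)) = g(\partial)(g(m)/f^i)$ after recognizing that $g\partial$ is again a derivation (Remark \ref{simple-ex}(2)); (3) since $D(R)$ is generated as a ring by $R = D(R)_0$ together with the derivations $\partial_1,\dots,\partial_n$, and since both $g(-)$ and the $D(R)$-action respect composition (the $D(R)$-module structure on $M_f$ is a ring action, and $g$ is a ring automorphism of $D(R)$ intertwining composition), the identity $g(\theta \xi) = g(\theta) g(\xi)$ for general $\theta$ follows from the cases $\theta \in R$ and $\theta = \partial_i$ by induction on the degree filtration $D(R)_i$, writing an arbitrary $\theta \in D(R)_i$ as an $R$-linear combination of products of the $\partial_j$'s.

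The main obstacle I anticipate is not conceptual but bookkeeping: one must be careful that the $D(R)$-action on $M_f$ is the \emph{correct} one (the differential-operator extension, not a naive one) and that the quotient-rule formula for a general $\theta \in D(R)_i$ acting on $m/f^i$ — which involves higher-order terms with higher powers of $f$ in the denominator — transforms correctly under $g$. The clean way around this is to avoid writing the general formula at all: reduce to generators as in step (3), so that only the multiplication operators and the first-order derivations need an explicit check, and then invoke multiplicativity of both $g$ and the module action. I would also remark that $G$ does act on $M_f$ as automorphisms (so the setup of Lemma \ref{local} applies) because $f \in R^G$ ensures the action is well defined, exactly as in the proof of Lemma \ref{local}.
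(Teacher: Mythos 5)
Your proposal is correct and follows essentially the same route as the paper: reduce to the generators of $D(R)$ (ring elements and derivations), handle the ring case via Lemma \ref{local}, and for a derivation apply $\sigma$ to the quotient-rule formula $\theta(m/f^i)=\theta(m)/f^i - i\,\theta(f)m/f^{i+1}$, using $f\in R^G$ and the identity $\sigma(\theta)(f)=\sigma(\theta(f))$. Your step (3), spelling out why multiplicativity of $\sigma$ and of the module action lets one pass from generators to all of $D(R)$, is a point the paper leaves implicit but is exactly the intended argument.
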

\begin{proof}
We have to prove that for all $\sigma \in G$, $\theta \in D(R)$ and $\xi \in M_f$ that 
\begin{equation}\label{eqn}
\sigma(\theta \xi) = \sigma(\theta)\sigma(\xi).
\end{equation}
As $D(R)$ is generated by $R$ and the derivations it suffices to prove \ref{eqn} when $\theta \in R$ or is a derivation. When $\theta \in R$ then clearly \ref{eqn} holds, see \ref{local}.

So now assume that $\theta$ is a derivation. By \ref{simple-ex}(2), $\sigma(\theta)$ is also a derivation.
Let $\xi = m/f^i$. We first compute
\begin{align*}
\sigma(\theta \xi) &=  \sigma(\theta(m/f^i)), \\
 &= \sigma\left( \frac{\theta m}{f^i} - \frac{i\theta(f)m}{f^{i+1}} \right), \\
 &= \frac{\sigma(\theta m)}{f^i}  -  \frac{i\sigma(\theta(f))\sigma(m)}{f^{i+1}}.
\end{align*}
Next we compute $\sigma(\theta)\sigma(\xi)$. As $\sigma(\theta)$ is a derivation we get
\begin{align*}
\sigma(\theta)\sigma(\xi) &= \sigma(\theta)\left( \frac{\sigma(m)}{f^i} \right),\\
 &=\frac{\sigma(\theta) \sigma(m)}{f^i} - \frac{i (\sigma(\theta)(f))\sigma(m)}{f^{i+1}}.
\end{align*}
Notice
\begin{enumerate}[\rm(1)]
\item
$ \sigma(\theta m) = \sigma(\theta)\sigma(m).$
\item
$\sigma(\theta)(f) = \sigma(\theta(\sigma^{-1}f))  = \sigma(\theta(f)); \text{since} \ f \in R^G. $
\end{enumerate}
Thus $\sigma(\theta \xi) = \sigma(\theta)\sigma(\xi)$. It follows that $M_f$ is a $D(R)*G$-module.
\end{proof}

We now prove the following analogue of Theorem \ref{loc-invar}.
\begin{theorem}\label{loc-invar-2}
Let $M$ be a $D(R)*G$-module. Let $I$ be an ideal in $R^G$. Then
\begin{enumerate}[\rm (1)]
\item
$H^i_{IR}(M)$ is a $D(R)*G$-module for all $i \geq 0$.
\item
$H^i_{IR}(M)^G \cong H^i_I(M^G)$ for all $i \geq 0$.
\end{enumerate}
\end{theorem}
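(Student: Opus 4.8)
The plan is to imitate, almost verbatim, the proof of Theorem \ref{loc-invar}, replacing the skew group ring $A*G$ there by $D(R)*G$ and feeding in Propositions \ref{basic} and \ref{prep} wherever the module structures are needed. Note first that $|G|$ is invertible in $D(R)$ (since $K$ has characteristic zero and $K \subseteq D(R)$), so all the formalism of Sections 2 and 3 applies with $D(R)$ in the role of the (non-commutative) ring $A$.

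For (1), fix generators $I = (f_1,\ldots,f_s)$ with all $f_i \in R^G$ and form the modified \v{C}ech complex
\[
\C \colon 0 \rt M \rt \bigoplus_{i=1}^{s} M_{f_i} \rt \cdots \rt M_{f_1 \cdots f_s} \rt 0 .
\]
Every term $M_{f_{j_1}\cdots f_{j_k}}$ is a localization of $M$ at an element of $R^G$, hence is a $D(R)*G$-module by Proposition \ref{prep}. The only thing to check is that each differential is $D(R)*G$-linear, and for this it suffices, as in Theorem \ref{loc-invar}, to show that for $f,g \in R^G$ the natural map $\eta \colon M_f \rt M_{fg}$ is $D(R)*G$-linear. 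Since $R*G \subseteq D(R)*G$, Theorem \ref{loc-invar} already gives that $\eta$ is $R*G$-linear; in particular $\eta$ is $G$-equivariant, so it only remains to see that $\eta$ is $D(R)$-linear. As $D(R)$ is generated by $R$ and the derivations and $\eta$ is $R$-linear, I would just verify $\eta(\theta\xi) = \theta\,\eta(\xi)$ for $\theta$ a derivation and $\xi = m/f^i$, a short computation with the quotient rule describing the $D(R)$-actions on $M_f$ and $M_{fg}$ (exactly as in the proof of Proposition \ref{prep}). Thus $\C$ is a complex of $D(R)*G$-modules and $H^i_{IR}(M) = H^i(\C)$ inherits a $D(R)*G$-module structure for all $i \geq 0$.

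For (2), Lemma \ref{local}(2) gives $(M_{f_{j_1}\cdots f_{j_k}})^G = (M^G)_{f_{j_1}\cdots f_{j_k}}$, so the fixed-point complex $\C^G$ in the sense of Theorem \ref{commute-hom} is precisely the modified \v{C}ech complex of $M^G$ with respect to $I$. Theorem \ref{commute-hom}(5) then yields $H^i_{IR}(M)^G = H^i(\C)^G \cong H^i(\C^G) = H^i_I(M^G)$ for all $i \geq 0$, as desired.

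I expect the only genuine work to be the verification that $\eta \colon M_f \rt M_{fg}$ commutes with the action of a derivation; everything else is a formal transcription of Section 3, with Propositions \ref{basic} and \ref{prep} supplying the needed $D(R)*G$-module structures and with Theorem \ref{commute-hom} (stated for arbitrary, possibly non-commutative, base ring) doing the homological bookkeeping. No idea beyond those already used for Proposition \ref{prep} should be required.
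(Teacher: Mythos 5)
Your proposal is correct and follows essentially the same route as the paper: localize term by term via Proposition \ref{prep}, check the \v{C}ech differentials are $D(R)*G$-linear by reducing to the single map $\eta \colon M_f \rt M_{fg}$, and invoke Theorem \ref{commute-hom} together with Lemma \ref{local}(2) for the isomorphism $H^i_{IR}(M)^G \cong H^i_I(M^G)$. The only (harmless) difference is one of emphasis: the paper treats the $D(R)$-linearity of $\eta$ as clear and re-derives the $G$-equivariance, whereas you import the $G$-equivariance from Theorem \ref{loc-invar} and flag the derivation computation as the step to verify.
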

\begin{proof}
(1)  Let $I = (f_1,\ldots,f_s)$. Consider the \v{C}ech complex
\[
\C \colon 0\rt M \rt \bigoplus_{i=1}^{s}M_{f_i} \rt \cdots \rt M_{f_1\cdots f_s} \rt 0
\]
\textit{Claim:} $\C$ is a complex of $D(R)*G$-modules.

By Lemma \ref{prep} each module in $\C$ is an $D(R)*G$-module. So we have to prove that each differential in $\C$ is $D(R)*G$-linear. To prove this it suffices to prove that if $f,g \in R^G$ then the natural map $\eta \colon M_f \rt M_{fg}$ is $D(R)*G$-linear. Clearly $\eta$ is $D(R)$-linear. By an argument similar to in \ref{loc-invar} we get that
$\eta(\sigma\xi) = \sigma(\eta(\xi))$ for any $\sigma \in G$ and $\xi \in M_f$. Thus $\eta$ is $D(R)*G$-linear. 
It follows that $\C$ is a complex of $D(R)*G$-modules. Therefore $H^i_{IR}(M)$ is a $D(R)*G$-module for all $i \geq 0$.

(2). Note the complex $\C^G$ as defined in Theorem \ref{commute-hom} is the \v{C}ech  complex on $M^G$. By Theorem \ref{commute-hom} it follows that $H^i_{IA}(M)^G \cong H^i_I(M^G)$ for all $i \geq 0$.
\end{proof}
As an easy consequence we obtain a proof of  Theorem \ref{main}. We  restate it here for the convenience of the reader.
\begin{theorem}\label{main-dr}(with hypotheses as above)
Let $I$ be an ideal in $R^G$. Then for all $i \geq 0$, $H^i_I(R^G)$ is a finite length $D(R^G)$-module.
\end{theorem}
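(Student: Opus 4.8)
The plan is to reduce this to the classical holonomicity of local cohomology of a polynomial ring together with the equivariant package already established. Write $D = D(R)$, so that $D \cong A_n(K)$ and, since $\chars K = 0$, $|G|$ is a unit in $D$; recall from \ref{basic} that $R$ is a $D*G$-module and from \ref{def-action} that $G$ acts on $D$ by ring automorphisms. By Theorem \ref{loc-invar-2}, $H^i_{IR}(R)$ is a $D*G$-module and $H^i_{IR}(R)^G \cong H^i_I(R^G)$, the isomorphism being $D^G$-linear by Theorem \ref{commute-hom}. Thus it suffices to prove that $H^i_{IR}(R)$ has finite length as a $D*G$-module and then to apply Corollary \ref{descent-Artin}.

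First I would recall that $H^i_{IR}(R)$ is a holonomic $D$-module: each localization $R_f$ is holonomic (Bernstein), the (modified) \v{C}ech complex computing $H^i_{IR}(R)$ is assembled from such localizations, and the class of holonomic $D$-modules is closed under subobjects, quotients, and hence under the cohomology of a complex of holonomic modules; this is the basis of the finiteness theorems of \cite{Lyu-1}. Since a holonomic $A_n(K)$-module has finite length, $H^i_{IR}(R)$ has finite length as a $D$-module. Now $D$ is a subring of $D*G$, so every $D*G$-submodule of $H^i_{IR}(R)$ is in particular a $D$-submodule; hence any strictly ascending chain of $D*G$-submodules is a strictly ascending chain of $D$-submodules, and $H^i_{IR}(R)$ has finite length as a $D*G$-module as well. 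Corollary \ref{descent-Artin}, applied with the (noncommutative) base ring $D$, then shows that $H^i_{IR}(R)^G$, and therefore $H^i_I(R^G)$, has finite length as a $D(R)^G$-module.

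I do not anticipate a serious obstacle: the substantive work has already been carried out in Sections 2 and 8, and the only external input is the well-known fact that $H^i_{IR}(R)$ is holonomic, hence of finite length over $A_n(K)$. The points to keep in mind are that Corollary \ref{descent-Artin} must be used in its noncommutative form --- which is precisely why Section 2 was developed for an arbitrary ring $A$ with $|G|$ invertible in $A$ --- and that the hypothesis $\chars K = 0$ enters twice: once to make $H^i_{IR}(R)$ holonomic and once to ensure that $|G|$ is invertible in $D(R)$.
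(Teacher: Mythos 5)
Your proposal is correct and follows essentially the same route as the paper: apply Theorem \ref{loc-invar-2} with $M = R$, use holonomicity of $H^i_{IR}(R)$ to get finite length over $D(R)$ and hence over $D(R)*G$, and then descend via Corollary \ref{descent-Artin}. The extra details you supply (the ascending-chain argument and the explicit appeal to the noncommutative form of \ref{descent-Artin}) are accurate elaborations of steps the paper leaves implicit.
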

\begin{proof}
We apply Theorem \ref{loc-invar-2} to the case $M = R$.
Note $H^i_{IR}(R)$ is a holonomic $D(R)$-module. So it has finite length as a $D(R)$-module. It follows that $H^i_{IR}(R)$ has finite length as a $D(R)*G$-module. So by \ref{descent-Artin} we get that $H^i_I(R^G) = H^i_{IR}(R)^G$ has finite length as a $D(R)^G$-module.
\end{proof}


\begin{thebibliography}{10}

\bibitem{AM}
M.~F.~Atiyah and I.~G.~Macdonald, 
\emph{Introduction to commutative algebra},
 Addison-Wesley Publishing Co., Reading, Mass.-London-Don Mills, Ont. 1969.
 
 \bibitem{BH}
W.~Bruns and J.~Herzog, \emph{{Cohen-Macaulay rings}}, vol.~39, Cambridge
  studies in advanced mathematics, Cambridge University Press,~Cambridge, 1993.

\bibitem{Hart}
R.~Hartshorne, 
\emph{Affine duality and cofiniteness},
Invent. Math. 9 1969/1970 145–-164. 

\bibitem{HuSh}
C.~Huneke and R.~Sharp,
\emph{ Bass Numbers of Local Cohomology Modules}, 
AMS Transactions 339 (1993), 765–-779. 

\bibitem{K}
M.~Katzman, 
\emph{An example of an infinite set of associated primes of a local cohomology module},
J. Algebra 252 (2002), no. 1, 161–-166. 

\bibitem{Lyu-1}
G.~Lyubeznik, 
\emph{Finiteness Properties of Local Cohomology Modules (an Application of D-modules to Commutative Algebra)},
 Inv. Math. 113 (1993), 41–-55.
 
\bibitem{Lyu-2}
\bysame,
\emph{F-modules: applications to local cohomology and D-modules in characteristic p>0}, 
J. Reine Angew. Math. 491 (1997), 65–-130.

\bibitem{Mat}
H.~Matsumura, \emph{Commutative ring theory}, second ed., Cambridge
  Studies in Advanced Mathematics, vol.~8, Cambridge University Press,
  Cambridge, 1989. 

\bibitem{AS}
A.~Singh,
\emph{$p$-torsion elements in local cohomology modules},
Math. Res. Lett. 7 (2000), no. 2-3, 165–-176. 

\bibitem{ASIS}
A.~Singh and I.~Swanson, 
\emph{Associated primes of local cohomology modules and of Frobenius powers},
Int. Math. Res. Not. 2004, no. 33, 1703–-1733. 

\bibitem{T}
W.~Traves, 
\emph{Differential operators on orbifolds},
J. Symbolic Comput. 41 (2006), no. 12, 1295–-1308. 

\bibitem{W}
K.~Watanabe,
\emph{Certain invariant subrings are Gorenstein. I}, 
Osaka J. Math. 11 (1974), 1–-8.

\end{thebibliography}
\end{document}